\newcommand{\Lt}{\mathcal{L}}
\newcommand{\Lnt}{\mathcal{L}_n}
\newcommand{\Drs}{D_{r}X_{s}}
\newcommand{\Drt}{D_{r}X_{t}}
\newcommand{\Dnrt}{D_{r}X^n_{t}}
\newcommand{\Dnrs}{D_{r}X^n_{s}}
\newcommand{\Dnt}{DX^{n}_{t}}
\newcommand{\bnps}{ b'_{n}(X^{n}_{s})}
\newcommand{\fnps}{ f'(X^{n}_{s})}
\newcommand{\R}{\mathbb{R}}
\newcommand{\E}{\mathbb{E}}
\newcommand{\D}{\mathbb{D}}
\numberwithin{equation}{section}
\newtheorem{thm}{Theorem}[section]
\newtheorem{lem}[thm]{Lemma}
\newtheorem{cor}[thm]{Corollary}
\newtheorem{rem}{Remark}
\newtheorem{hypo}[thm]{Hypothesis}
\newtheorem{prop}[thm]{Proposition}
\title{Smooth density for the Solution of Scalar SDEs with Locally Lipschitz Coefficients under H\"ormander Condition}
\author{M. Tahmasebi }
\ead{mh_tahmasebi2000@yahoo.com} 
\address{
Department of Applied Mathematics, Faculty of Mathematical Sciences,
Tarbiat Modares University,
P.O. Box 14115-134,  Tehran, Iran}
\begin{document}
\begin{abstract}
In this paper the existence of a smooth density is proved for the solution of an SDE, with locally Lipschitz coefficients and semi-monotone drift, under H\"ormander condition. We prove the nondegeneracy condition for the solution of the SDE, from it  an integration by parts formula would result  in the Wiener space. To this end we construct a sequence of SDEs with globally Lipschitz coefficients whose solutions converge to the original one and use some Lyapunov functions  to show the uniformly boundedness of  the $p$-moments of the solutions and their Malliavin derivatives.
\end{abstract}
\maketitle
{\bf Subject classification}: {Primary 60H07, Secondary 60H10,62G07.}\\
{\bf Keywords}: smoothness of density, stochastic differential equation, semi-monotone drift, Malliavin calculus, H\"ormander condition.

%%%%%%%%%%%%%%%%%%%%%%%%%%%%%%%%%%%%%%%%%introduction
\section{Introduction}
In this paper, we study Malliavin differentiability and the existence of a smooth density for the solution of an SDE. 
We consider a scalar SDE whose semi-monotone drift and locally Lipschitz coefficients satisfy  H\"ormander condition. Such equations are considered mostly in finance, biology, and  dynamical systems and are more challenging when considered on infinite dimensional spaces. (see e.g. \cite{B99, Z95, GM07}).\\
We prove the existence of a unique, infinitely Malliavin differentiable, strong solution to this SDE satisfying some nondegeneracy condition, and derive both the integration by parts formula in the Wiener space and the existence of a smooth density for this solution.\\
This subject have been studied by many authors, mostly in the case where the coefficients are globally Lipschitz. In \cite{Kusu84}
Kusuoka and Stroock have shown that an SDE whose coefficients are $C^{\infty}$-globally Lipschitz with polynomial growth, has a strong Malliavin differentiable solution of any order. The absolute continuity of the law of the solution of SDEs with respect to the Lebesgue measure and the smoothness of its density under some nondegeneracy condition are shown in \cite{Nualart06, Jacod87}. Nualart (2006) shows that the H\"ormander condition,  posed on the coefficients,  condition  {\bf (H)} in this paper, implies this required nondegeneracy condition, if the coefficients are $C^{\infty}$-globally Lipschitz and have polynomial growth.  
Assuming the nondegeneracy condition one can derive some integration by parts formula in the Wiener space and also  reqularity for the distribution of the solution (see e.g.  \cite{Nualart06}). 
It is often of interest to investors to derive an option pricing formula and to know its sensitivity with respect to various parameters. The integration by parts formula obtained from Malliavin calculus can transform the derivative of the option price into a weigthed integral of random variables. This gives much more accurate and fast converging numerical solutions  than obtained by the classical methods (\cite{Higa04, Bavou06}). 
The interested reader could see \cite{Alos08, Marco11}).\\
 In recent years, there were attemps to generalize these results to SDEs with non-globally Lipschitz coefficients.  
For example,  in \cite{Fourier10} using a  Fourier transform argument,  some absolute continuity results are obtained for the law of the solution of an SDE with H\"older coefficients.
The existence of densities for a general class of non-Markov It\"o processes under some spatial ellipticity condition and 
 that allow the degeneracy of the diffusion coefficient is shown in \cite{Bell04}.  
Marco \cite{Marco11} has shown that assuming some local properties of coefficients,  and uniform ellipticity of diffusion coefficient, the law of the solution of the SDE has smooth density. If the diffusion coefficient is uniformly elliptic, then  the H\"ormander condition is satisfied. When the noise is a fractional brownian motion or a Levy process the same results are obtained under ellipticity and H\"ormander condition as well. 
For other references on this subject, we
refer the reader to \cite{Kusuo10, Marco10, Hiraba92, Baudoin11}. \\
To deal with the SDE with non-globally Lipschitz coefficients, we construct a sequence of SDEs with globally Lipschitz coefficients whose solutions are Malliavin differentiable of any order and satisfy a nondegeneracy condition. In this way we can apply the classical Malliavin calculus to the solutions. We can find also a uniform bound for the moments of the solutions, and all their Malliavin derivatives, by using some Lyapunov functions. Then we will prove the nondegeneracy condition for the original SDE, using the nondegeneracy condition for the sequence of solutions to the constructed SDEs.
This result implies the integration by parts formula in the Wiener space and the existence of the smooth density for the solution.\\
The paper is organized as follows. In section 2, we recall some basic results from Malliavin calculus that will be used in the paper, in particular the integration by parts formula due to \cite[Proposition 2.1.4]{Nualart06}. In section 3, we state the assumptions and our main results; In section 4, we prove the uniformly boundedness for the moments of Malliavin derivatives of the solution to a sequence of approximating SDEs, as there exist some Lyapunov functions.  Section 5  involve the construction of our approximating SDEs with globally Lipschitz coefficients, and proving the convergence of their solutions and their Malliavin derivatives to those of the solution of the original SDE (\ref{equa}). Also, we introduce some Lyapunov functions which results to the infinitely weak differentiability. In section 6, we will prove the nondegeneracy condition that implies the integration by parts formula and the existence of smooth density. Finally, in Appendix we state the detailed proof on selection of approximating processes.
 %*********************************************
\section{Some basic results from Malliavin calculus}\label{ma}
In this article, we use the same notations as in \cite{Nualart06}. Let $\Omega$ denote the Wiener space
$C_{_{0}}([0,T];\R)$ endowed with
$\parallel.\parallel_{_{\infty}}$-norm making it a (separable)
Banach space. Consider a complete
probability space $(\Omega,\mathcal{F},P)$, in which $\mathcal{F}$ is generated by the open
subsets of the Banach space $\Omega$, $W_{t}$ is a d-dimensional Brownian motion, and $\mathcal{F}_{t}$ is the filtration generated by $W_t$. \\
Consider the Hilbert space $H:=L^2([0,T];\R)$.
The Malliavin derivartive operator  
$D$ is closable from $L^p(\Omega)$ to $L^p(\Omega, H)$, for every $p \geq 1$ and the adjoint of the operator $D$ is denoted by $\delta$. 
We use the notation $\bigwedge_F=\Vert DF \Vert_H^2$
to show the Malliavin
covariance matrix for a random variable $F$, and  for every $k \geq 1$, we set $D_{r_1 \cdots r_k}F = D_{r_k}(D_{r_1 \cdots r_{k-1}}F)$.\\
Now let $Y_t$ be a solution to the following SDE;
\begin{equation}\label{maequa}
dY_{t}=B(Y_t)dt+A(Y_t)dW_{t}    \qquad Y_0=x_{0},
\end{equation}
where  $B:\R \longrightarrow \R$  is a measurable function and  $A:\R \longrightarrow \R$ is an $C^\infty$ function. 
Let $Z_t$ be the solution of the following linear SDE;
$$Z_t= 1+ \int_{0}^{t} B'(Y_s) Z_s ds + \int_{0}^{t}  A'(Y_s)Z_s dW_s $$
and 
\begin{equation*}
C_t:= \int_{0}^{t} (Z_s^{-1} A(Y_s))^2 ds, 
\end{equation*}
and assume the  H\"{o}rmander's condition holds as follows:\\
{\bf (H)} ~ ~ $A(x_0) \neq 0$ or $A^{(n)}(x_0)B(x_0) \neq 0$ for some $n \geq 1$.\\
Under this condition Nualart  \cite{Nualart06} 
has shown the following proposition.
\begin{prop}\label{horman}
For a solution $Y_t$ to an SDE with globally Lipschitz coefficients and
polynomial growth for all their derivatives, the H\"{o}rmander's condition {\bf (H)} implies that for any $p \geq 2$ and any $\epsilon$ small enough,
\begin{equation}\label{chor}
P\Big(C_t \leq \epsilon\Big) \leq \epsilon^p
\end{equation}
and $(detC_t)^{-1} \in L_p(\Omega)$ for all $p$.Thereby obtaining the nondegeneracy condition for $Y_t$ and thus the integration by parts formula in the Wiener space and an infinitely differentiable density, too. 
\end{prop}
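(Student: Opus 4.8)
This is the one-dimensional case of the classical Malliavin-calculus proof of H\"ormander's theorem, so I only outline the steps. The plan is first to reduce the statement to the small-ball estimate \eqref{chor}. For $r\le t$ the Malliavin derivative $D_{r}Y_{t}$ solves the linearised equation of \eqref{maequa} started from $A(Y_{r})$ at time $r$, so by the flow property of $Z$ one gets $D_{r}Y_{t}=Z_{t}Z_{r}^{-1}A(Y_{r})$ for $r\le t$ and $D_{r}Y_{t}=0$ for $r>t$; hence
\begin{equation*}
\bigwedge_{Y_{t}}=\int_{0}^{t}(D_{r}Y_{t})^{2}\,dr=Z_{t}^{2}\int_{0}^{t}\big(Z_{r}^{-1}A(Y_{r})\big)^{2}\,dr=Z_{t}^{2}\,C_{t}.
\end{equation*}
Since $A$ and $B$ are globally Lipschitz (so $A',B'$ are bounded) with polynomially growing higher derivatives and $Y$ has finite moments of all orders, the solutions $Z_{t}$ and $Z_{t}^{-1}$ of the associated linear SDEs lie in every $L^{p}(\Omega)$; consequently $(\det\bigwedge_{Y_{t}})^{-1}=Z_{t}^{-2}C_{t}^{-1}$ belongs to every $L^{p}$ as soon as $C_{t}^{-1}$ does, and by $\E[C_{t}^{-p}]=p\int_{0}^{\infty}\lambda^{p-1}P(C_{t}<1/\lambda)\,d\lambda$ the latter is a consequence of \eqref{chor}. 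So it suffices to prove \eqref{chor}.

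For \eqref{chor} I would introduce the processes $\phi^{(0)}_{s}:=Z_{s}^{-1}A(Y_{s})$ and, by iterating It\^o's formula, $\phi^{(k)}_{s}:=Z_{s}^{-1}h_{k}(Y_{s})$ with $h_{0}=A$ and each $h_{k+1}$ an explicit expression of polynomial growth in $A,B$ and their derivatives. The key one-dimensional fact is that the martingale part of $Z_{s}^{-1}A(Y_{s})$ cancels, so $\phi^{(0)}_{s}=A(x_{0})+\int_{0}^{s}\phi^{(1)}_{u}\,du$; and, inductively, each $\phi^{(k)}$ is a continuous semimartingale with $L^{p}$-bounded characteristics whose bounded-variation part has derivative $\phi^{(k+1)}$. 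Evaluating at $s=0$ gives $\phi^{(k)}_{0}=h_{k}(x_{0})$, and a direct computation (in Stratonovich form these are, up to the usual corrections, the iterated Lie brackets $\mathrm{ad}^{\,k}_{B\partial}(A\partial)$ at $x_{0}$) shows that, when $A(x_{0})=0$, one has $h_{k}(x_{0})=A^{(k)}(x_{0})B(x_{0})^{k}$ up to terms that vanish whenever $h_{0}(x_{0}),\dots,h_{k-1}(x_{0})$ do. Thus condition {\bf (H)} is precisely the assertion that $h_{N}(x_{0})\neq0$ for some $N\ge0$, with $N=0$ the elliptic case $A(x_{0})\neq0$.

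Now fix such an $N$, set $c:=|h_{N}(x_{0})|>0$, and run the standard iteration. Starting from $\{C_{t}\le\epsilon\}=\{\int_{0}^{t}(\phi^{(0)}_{s})^{2}ds\le\epsilon\}$, Norris's lemma applied successively to $\phi^{(0)},\dots,\phi^{(N-1)}$ yields, off an event of probability at most $\epsilon^{p}$, a bound $\int_{0}^{t}(\phi^{(N)}_{s})^{2}ds\le\epsilon^{\nu}$ with $\nu=\nu(N)>0$. On the other hand $\phi^{(N)}$ is continuous with $\phi^{(N)}_{0}=h_{N}(x_{0})$, so $\int_{0}^{t}(\phi^{(N)}_{s})^{2}ds\ge(c^{2}/4)(\sigma\wedge t)$ where $\sigma:=\inf\{s:|\phi^{(N)}_{s}|<c/2\}$, while the Burkholder--Davis--Gundy inequality together with the moment bounds on the characteristics of $\phi^{(N)}$ gives $P(\sigma\le\eta)\le P\big(\sup_{s\le\eta}|\phi^{(N)}_{s}-\phi^{(N)}_{0}|\ge c/2\big)\le C_{m}\eta^{m}$ for $\eta\le1$ and all $m$. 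Hence for $\epsilon$ small the event $\{\int_{0}^{t}(\phi^{(N)}_{s})^{2}ds\le\epsilon^{\nu}\}$ is contained in $\{\sigma\le4\epsilon^{\nu}/c^{2}\}$ and thus has probability at most $C_{m}\,\epsilon^{m\nu}$, which is $\le\epsilon^{p}$ once $m$ is chosen large and $\epsilon$ small. Adding the finitely many $O(\epsilon^{p})$ exceptional probabilities from the Norris steps yields $P(C_{t}\le\epsilon)\le\epsilon^{p}$ for all $\epsilon$ small enough, i.e. \eqref{chor}; the integration by parts formula and the existence of a smooth density then follow from \cite[Proposition 2.1.4]{Nualart06} and the standard regularity criteria.

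The crux of the argument — and the only genuinely technical ingredient — is Norris's lemma, the quantitative statement that a continuous semimartingale which is small in $L^{2}([0,t])$ must, with overwhelming probability, have small drift and diffusion coefficients as well; combined with the elementary but bookkeeping-heavy verification that {\bf (H)} coincides with the non-vanishing of one of the $h_{k}(x_{0})$, all the remaining steps are the routine reductions sketched above.
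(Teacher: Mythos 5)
Your sketch is correct and follows exactly the argument the paper itself relies on: the paper offers no proof of Proposition \ref{horman}, citing Nualart (2006), and your reduction $\bigwedge_{Y_t}=Z_t^2C_t$ together with the Norris-lemma iteration over the processes $Z_s^{-1}h_k(Y_s)$ is precisely that standard one-dimensional H\"ormander proof. The only point worth tightening in a full write-up is the bookkeeping that identifies condition \textbf{(H)} with the non-vanishing of some $h_N(x_0)$ (the It\^o-correction terms $\tfrac12[A,[A,h_k]]$ must be checked to vanish along with the lower-order brackets), but this is exactly the "elementary but bookkeeping-heavy verification" you flag.
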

%***************************************
\section{Formulation of main results}
In this section we consider the $\mathcal{F}_{t}$-adapted stochastic process $X_{t}$  taking values in $\R$,  which is a solution to the following stochastic differntial equation\\
\begin{equation}\label{equa}
dX_{t}=[b(X_{t})+f(X_{t})] dt+\sigma(X_{t})dW_{t},  \qquad X_0=x_0.
\end{equation}
where  $b,f:\R \longrightarrow \R$ are measurable functions and  $\sigma:\R \longrightarrow \R$ is a measurable $C^\infty$ function. 
We denote by $\Lt$ the second-order differential operator associated to SDE (\ref{equa}):
\begin{equation*}
\Lt= \frac{1}{2} \sigma^2(x)  \partial^2 +  [b(x)+f(x)]\partial
\end{equation*}
Throughout the paper we assume that $b$, $f$ and $\sigma$ satisfy the following Hypothesis.
%**************************hypothesis
\begin{hypo}\label{hypo1}
Functions $b,f$ and $\sigma$ satisfiy the following conditions.
\begin{itemize}
 \item  
 $b$ and $\sigma$ are $C^\infty$ 
 locally Lipschitz and all of their derivatives have polynomial growth; i.e., for every $j \geq 0$ there exist some constants $\lambda_j$ and $q_j$  such that for every $x \in \R$
 \begin{equation}\label{polyno}
\vert b^{(j)} (x) \vert +  \vert \sigma^{(j)} (x) \vert \leq \lambda_j(1+\vert x \vert^{q_j})
 \end{equation}  
 Also, set $\xi:= \max_{j \geq 1} q_j < \infty$.
 \item  The function $f$ is $C^\infty$, globally Lipschitz  with the Lipschitz constant $k_1$ and all of its derivatives  are bounded. 
\end{itemize}
\end{hypo}
Since the coefficients are locally Lipschitz functions, we need further assumptions that the solution of (\ref{equa}) does not explode. Also, to obtain the Malliavin differentiability of the solution $X_t$ in all $t \in [0,T]$, we consider the following hypothesis.   
\begin{hypo}\label{hypo2}
\begin{itemize}
\item  The function $b$ is a $C^\infty$ uniformly monotone function, i.e., there exists a constant $K$ such that 
 for every $x,y \in \R$,
\begin{equation}\label{monoton}
\Big(b(y)-b(x)\Big)(y-x) \leq -K{\vert y-x \vert}^2.
\end{equation}
 \item For every $p \geq 1$ there exist some constants $\alpha_p$ and $\beta_p$ such that
 \begin{equation}\label{Bound}
a b(a)+(12p-1) \sigma^2(a) + (4p-1)  \vert a\sigma'(a)\vert^2  \leq \alpha_p + \beta_p
\vert a \vert^2   \qquad  \forall a \in \R,
\end{equation}
\end{itemize}
\end{hypo}
\begin{rem}
Notice that for every monotone drift $b$ and every globally Lipschitz diffusion $\sigma$, Hypothesis \ref{hypo2} is satisfied. Hypothesis \ref{hypo2} is satisfied also for some locally Lipschitz diffusion $\sigma$, for example when $b(x)=-x^5$ and $\sigma(x)=x^2$.
\end{rem}
Since the constant $\beta_p$ in (\ref{Bound}) could be negative, we may use the version of Gronwall's inequality which is proved in \cite[Lemma 1.1]{Hasminskii12}.\\
Now we are going to present the higher order differentiability and the integration by parts formula for the solution of SDE(\ref{equa}) in the Wiener space.   
\begin{thm}\label{Dinf}
Under Hypotheses \ref{hypo1} and \ref{hypo2}, the SDE (\ref{equa}) has a unique strong solution in $\D^{\infty}$.
\end{thm}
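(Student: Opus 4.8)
The plan is to realize $X_t$ as a limit of solutions to approximating equations with globally Lipschitz coefficients, to which Proposition~\ref{horman} and the classical Malliavin calculus apply, and then to transfer the regularity to $X_t$ by a uniform-bounds-plus-closedness argument. First I would replace $b$ and $\sigma$ by globally Lipschitz, $C^\infty$, polynomially bounded truncations $b_n$ and $\sigma_n$ that coincide with $b,\sigma$ on $\{|x|\le n\}$, built so that $b_n$ stays uniformly monotone with the same constant $K$ as in (\ref{monoton}) and so that the analogue of (\ref{Bound}) holds with constants $\alpha_p,\beta_p$ independent of $n$ (this is the role of the careful selection carried out in the Appendix). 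For each $n$ the equation
\begin{equation*}
dX^n_t=[b_n(X^n_t)+f(X^n_t)]\,dt+\sigma_n(X^n_t)\,dW_t,\qquad X^n_0=x_0,
\end{equation*}
then has a unique strong solution $X^n\in\D^\infty$, whose Malliavin derivatives of every order solve explicit linear SDEs driven by the derivatives of $b_n,f,\sigma_n$.

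The next step is the uniform-in-$n$ moment control. Applying It\^o's formula to a Lyapunov function of the form $(1+|x|^2)^p$ and using the semi-monotone bound (\ref{Bound}) together with the global Lipschitz property of $f$, one obtains a differential inequality for $\E(1+|X^n_t|^2)^p$ whose rate $\beta_p$ may be negative; the version of Gronwall's lemma in \cite[Lemma 1.1]{Hasminskii12} then yields $\sup_n\E[\sup_{t\le T}|X^n_t|^{2p}]<\infty$ for every $p$. The same Lyapunov technique, now exploiting that after truncation $b_n'\le -K$ so the drift of the first-variation and Malliavin equations is dissipative, while the remaining terms have at most polynomial growth in $X^n$ that is absorbed by the moments already controlled, gives inductively in the order $k$ that $\sup_n\E\big[\sup_{t\le T}\Vert D^k X^n_t\Vert^p_{H^{\otimes k}}\big]<\infty$ for all $k\ge1$ and all $p\ge1$; this is precisely the content of Section~4.

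Finally, pathwise uniqueness for each (globally Lipschitz) approximating equation, together with $b_n=b_m$ and $\sigma_n=\sigma_m$ on $\{|x|\le n\}$ for $m\ge n$, makes the processes consistent: writing $\tau_n:=\inf\{t:|X^n_t|\ge n\}$ one has $X^n=X^m$ on $[0,\tau_n]$ and $\tau_n\le\tau_m$, so $X_t:=X^n_t$ for $t\le\tau_n$ is well defined up to $\tau_\infty:=\sup_n\tau_n$, and the uniform bound $\sup_n\E[\sup_{t\le T}|X^n_t|^{2p}]<\infty$ forces $\tau_\infty=\infty$ a.s.; hence $X$ is a global strong solution of (\ref{equa}), whose uniqueness follows from (\ref{monoton}) and the local Lipschitz property of $f,\sigma$ via the usual It\^o--Gronwall argument. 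Since $X^n=X$ on $[0,\tau_n]$ and $P(\tau_n\le T)\to0$ with uniform $L^{2p}$ control, $X^n_t\to X_t$ in $L^p(\Omega)$ for every $p$; combined with the uniform boundedness of $\{D^k X^n_t\}_n$ in $L^p(\Omega;H^{\otimes k})$ and the closedness of the operators $D^k$ (equivalently, weak compactness in the reflexive spaces $\D^{k,p}$), this forces $X_t\in\D^{k,p}$ for all $k$ and $p$, i.e. $X_t\in\D^\infty$, with $D^k X_t$ the weak limit of the approximating derivatives. The main obstacle is the uniform-in-$n$ estimate on the higher-order Malliavin derivatives: the linear SDEs they satisfy are not dissipative on their own, so one must choose the truncations $b_n,\sigma_n$ so that the monotonicity constant $K$ and the polynomial-growth exponents of Hypothesis~\ref{hypo1} are preserved uniformly, and then design Lyapunov functions (together with the negative-rate Gronwall lemma) that simultaneously absorb the dissipative drift and the polynomially growing diffusion and lower-order terms — this is where Hypotheses~\ref{hypo1} and~\ref{hypo2} and the delicate construction of the Appendix are all needed.
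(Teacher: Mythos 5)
Your proposal follows essentially the same route as the paper: truncate $b,\sigma$ by smooth cutoffs to obtain globally Lipschitz approximations whose solutions lie in $\D^{\infty}$, derive uniform-in-$n$ $L^p$ bounds for $X^n_t$ and all of its Malliavin derivatives via Lyapunov functions together with the negative-rate Gronwall lemma of Hasminskii, and pass to the limit using the $L^p$ convergence of $X^n_t$ and the closedness of the operators $D^k$ (Lemma 1.2.3 in Nualart). The only small discrepancy is that the paper's multiplicative cutoff $b_n=\phi_n b$ does not preserve the monotonicity constant $-K$ (it only yields $b_n'\le C_{0,b}$ uniformly in $n$), but this one-sided bound is all the Lyapunov estimates on $[0,T]$ require, so your extra dissipativity claim is unnecessary rather than harmful.
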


We also show the 
nondegeneracy condition for $X_t$ which derive the integration by parts formula in \cite[Proposition 2.1.4]{Nualart06} and thus an smooth density.
\begin{thm}\label{DENSF}
The solution $X_t$ of SDE (\ref{equa}) is nondegenerate and has smooth density.
\end{thm}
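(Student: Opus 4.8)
The plan is to transfer the nondegeneracy estimate of Proposition \ref{horman} from the approximating sequence $X^n_t$ to the limit $X_t$, and then invoke the classical criterion (\cite[Proposition 2.1.4]{Nualart06} together with the smoothness criterion for densities) to conclude. Recall from Section 5 that $X^n_t$ solves an SDE with globally Lipschitz coefficients $b_n, f, \sigma$ whose derivatives have polynomial growth, that $X^n_t \to X_t$ in $\D^\infty$ (Theorem \ref{Dinf}), and that the associated H\"ormander condition \textbf{(H)} is inherited by each approximating equation because it is a condition on the coefficients and their derivatives at the single point $x_0$, which are unchanged (or converge to those of $b,\sigma$) under the localization. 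Hence Proposition \ref{horman} applies to each $X^n_t$: writing $Z^n_t$ for the first variation process of $X^n_t$ and $C^n_t := \int_0^t (\,(Z^n_s)^{-1}\sigma(X^n_s)\,)^2\,ds$, we have $P(C^n_t \le \epsilon) \le \epsilon^p$ for all $p \ge 2$ and all $\epsilon$ small enough.

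The first main step is to identify $C^n_t$ with the Malliavin covariance $\bigwedge_{X^n_t} = \Vert D X^n_t\Vert_H^2$ up to the usual representation $D_r X^n_t = Z^n_t (Z^n_r)^{-1}\sigma(X^n_r)\mathbf{1}_{[0,t]}(r)$, so that $\bigwedge_{X^n_t} = (Z^n_t)^2 C^n_t$. Since $b$ is uniformly monotone and \eqref{Bound} holds, the uniform $p$-moment bounds established in Sections 4--5 give $\sup_n \E\,|Z^n_t|^p < \infty$ and $\sup_n \E\,|(Z^n_t)^{-1}|^p < \infty$ for every $p$; combined with the polynomial growth of $\sigma$ and the uniform moment bounds on $X^n_t$, this yields that the estimate $P(\bigwedge_{X^n_t} \le \epsilon) \le c_p\,\epsilon^{p}$ holds \emph{uniformly in $n$} (after absorbing the $Z^n_t$ factors via H\"older, at the cost of lowering the exponent, which is harmless since $p$ is arbitrary). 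Equivalently, $\sup_n \E\,[\,(\bigwedge_{X^n_t})^{-p}\,] < \infty$ for every $p \ge 1$.

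The second step is to pass to the limit. Because $X^n_t \to X_t$ in $\D^{\infty}$, one has $D X^n_t \to D X_t$ in $L^p(\Omega;H)$ for every $p$, hence $\bigwedge_{X^n_t} \to \bigwedge_{X_t}$ in $L^p(\Omega)$ along a subsequence almost surely. By Fatou's lemma applied to the negative moments,
\begin{equation*}
\E\big[(\bigwedge_{X_t})^{-p}\big] \le \liminf_{n} \E\big[(\bigwedge_{X^n_t})^{-p}\big] < \infty
\end{equation*}
for every $p \ge 1$, which is precisely the nondegeneracy condition $(\det \bigwedge_{X_t})^{-1} \in L^p(\Omega)$ for all $p$. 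Together with $X_t \in \D^\infty$ (Theorem \ref{Dinf}), this is exactly the hypothesis of the integration-by-parts formula \cite[Proposition 2.1.4]{Nualart06}, and iterating it over the order of differentiation yields that the law of $X_t$ has a $C^\infty$ density, as in \cite{Nualart06, Jacod87}.

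I expect the main obstacle to be the \emph{uniformity in $n$} of the small-ball estimate for $\bigwedge_{X^n_t}$: Proposition \ref{horman} is stated qualitatively for a fixed equation, so one must either track the constants through its proof to see they depend on the coefficients only through the quantities controlled uniformly by Hypotheses \ref{hypo1}--\ref{hypo2} (polynomial-growth constants $\lambda_j, q_j$, the monotonicity constant $K$, the constants $\alpha_p,\beta_p$, and the fixed H\"ormander data at $x_0$), or else run the Norris-lemma argument underlying Proposition \ref{horman} directly on $X^n_t$ with the uniform moment bounds of Sections 4--5 in hand. Establishing the uniform negative moment bound $\sup_n\E[(\bigwedge_{X^n_t})^{-p}] < \infty$ is the crux; once it is secured, the Fatou passage to the limit and the appeal to \cite[Proposition 2.1.4]{Nualart06} are routine.
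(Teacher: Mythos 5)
Your overall strategy coincides with the paper's: verify condition {\bf (H)} for the localized coefficients $b_n,\sigma_n$, apply Proposition \ref{horman} to each $X^n_t$, and transfer the nondegeneracy to $X_t$ via the convergence $DX^n_t \to DX_t$. Where you differ is in the transfer step. The paper works at the level of small-ball probabilities: writing $\Lambda_{X^n}(t)=C^n_t(Z^n_t)^{-2}$ and using the uniform moment bound (\ref{lz}) on $Z^n_t$, it obtains $P(\Lambda_{X^n}(t)\le\epsilon)\le(1+l_p)\epsilon^p$, and then uses the inclusion $\{\Lambda_{X}(t)\le\epsilon\}\subseteq\{\Lambda_{X^N}(t)\le 2\epsilon\}\cup\{\vert\Lambda_{X^N}(t)-\Lambda_{X}(t)\vert>\epsilon\}$ together with Chebyshev and a choice of $N=N(\epsilon)$ making $\E\big[\vert DX^N_t-DX_t\vert^2\big]<\epsilon^{p+1}$, to conclude $P(\Lambda_X(t)\le\epsilon)\le C\epsilon^p$ directly; no negative moments of $\Lambda_{X^n}(t)$ are ever formed. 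Your route instead upgrades the small-ball estimate to uniform negative moments $\sup_n\E\big[(\Lambda_{X^n}(t))^{-p}\big]<\infty$ and passes to the limit by Fatou along an almost surely convergent subsequence; this is also valid (the map $x\mapsto x^{-p}$ is continuous on $[0,\infty]$ with values in $(0,\infty]$, so Fatou applies before you know $\Lambda_X(t)>0$ a.s.), and is arguably cleaner, though it asks for slightly more than the paper actually uses. Crucially, both arguments stand or fall on the very point you single out as the crux: the threshold $\epsilon_0(p)$ in Proposition \ref{horman}, and the implied constants in $P(C^n_t\le\epsilon)\le\epsilon^p$, must be controlled uniformly in $n$ --- in the paper this need is hidden in the fact that $N$ depends on $\epsilon$, so the small-ball estimate at level $N(\epsilon)$ must already hold at that $\epsilon$. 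The paper asserts this uniformity implicitly and without proof; you at least name it and indicate how it would be checked (tracking constants through the Norris-lemma argument using the uniform moment bounds of Sections 4--5). So your proposal is sound modulo exactly the same unproved uniformity that the paper itself relies on, and its final appeal to \cite[Proposition 2.1.4]{Nualart06} matches the paper's conclusion.
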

%********************************************
\section{Malliavin differentiability and Lyapunov functions}
In this section, we consider the SDE (\ref{equa}) with Hypothesis \ref{hypo1} and introduce some new stochastic sytems by using a sequence of $\{X_t^n\}$ and prove that if there exist some suitable Lyapunov functions for these systems, then the SDE has an infinitely Malliavin differentiable solution. It will be usefull for the next section, as we construct desired Lyapunov functions under Hypothesis \ref{hypo2}.\\ 
Consider two sequences of functions $b_n, \sigma_n \in C_{loc}^1$ and $G_n \subseteq \R$ such that $\bigcup G_n=\R$ and for every $x \in G_n$,  $\sigma_n(x)= \sigma(x)$ and  $b_n(x) =b(x)$. Assume that there exists a Lyapunov function $F$; ( i.e.,   F is positive and for some constant $p \geq 2$ and every $x$ we have $F(x) \geq c_0 \vert x \vert^{p}$)
     and a sequence of strong solutions $\{X_t^n\}\subseteq \D^{1, p}$ to the SDE's 
\begin{equation*}
X_{t}^n=x_{0}+\int_{0}^{t} b_{n}(X_{s}^n)ds+\int_{0}^{t}
\sigma_n(X_{s}^n)dW_{s}
\end{equation*}
such that 
\begin{equation}\label{supF}
\sup_{n \geq 1}\sup_{0 \leq t \leq T}\E\Big[ F(X_t^n)  \Big] < \infty.
\end{equation}
Let $\tau_n$ is the first exit time of $X_t$ from $G_n$. Also, assume that the sequence $X_t^n$ converges almost surely, or for the transition probability measure  $P^{(n)}$ associated with $X^n$,
\begin{equation}\label{pn}
 \lim_{n \longrightarrow \infty}  P^{(n)}(\tau_n \leq t) =0
\end{equation}
Our main result is the following theorem which proves the Malliavin differentiability for the solution of an SDE,  using Lyapunov function. 
\begin{thm}\label{2lay}
Assume that there exists some Lyapunov function $V(.,.)$ and functions $K(.,.)$ and $G(.)$ for the linearized system 
\begin{equation*}
LS(r):=\left\{ \begin{array}{lcr}
X_{t}^n=x_{0}+\int_{0}^{t} \Big(b_{n}(X_{s}^n)+f(X_s^n)\Big)ds+\int_{0}^{t}
\sigma_n(X_{s}^n)dW_{s}  \\
\\
Y^n_t =\sigma_n(X^n_{r})+\int_{r}^{t}\Big(\bnps+\fnps\Big)Y^n_s
ds+\int_{r}^{t} \sigma'_n(X_{s}^n)Y^n_s dW_{s}, 
\end{array}  \right.
\end{equation*}
with infinitesimal operator $L_{n,r}$ such that for some constant $c$  
\begin{equation}\label{L}
L_{n,r}V(x,y) \leq c \Big( V(x,y)+G(x,y)+h(x) \Big),
\end{equation}
\begin{equation}\label{G}
\E\Big[G(X_t^n, \Dnrt)\Big]=0,~   \forall t > r, \qquad \textmd{and}  \qquad 
%\end{equation}
%\begin{equation}\label{h}
\sup_{n \geq 1}\sup_{0 \leq t \leq T}\E\Big[h(X_t^n)\Big] < c,   
\end{equation}
\begin{equation}\label{cL}
\Big(\partial_x V(x,y) \sigma_n(x) + \partial_y V(x,y) y\sigma'_n(x)\Big)^2 \leq c K(x,y),
\end{equation}
\begin{equation}\label{r}
\sup_{n \geq 1}\E\Big[V(X_r^n, \sigma_n(X_r^n)\Big] \leq c, \qquad \textmd{and}  \qquad 
\sup_{r \leq t \leq T}\E\Big[K(X_t^n, D_rX_t^n)\Big] < \infty.   
\end{equation}
Then there exists a random process $X_t \in \D^{1, p}$ which is the solution of SDE (\ref{equa}),  
\begin{equation*}
X_t^n \longrightarrow X_t  ~ in ~ L^p([0,T] \times \Omega),
 \qquad \textmd{and}  \qquad 
 DX^n \longrightarrow DX \vspace{0.25cm}~ in ~ \D^{1,p}. 
\end{equation*}
\end{thm}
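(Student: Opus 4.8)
The plan is to convert the two Lyapunov hypotheses into uniform $L^p$-bounds, then to pass to the limit to identify $X_t$ as the unique strong solution of (\ref{equa}), and finally to combine closedness of the Malliavin derivative with a stability estimate for the linearized equations. First, the moment bounds: from (\ref{supF}) and $F(x)\ge c_0\vert x\vert^p$ one gets immediately $\sup_n\sup_{0\le t\le T}\E[\vert X_t^n\vert^p]<\infty$. For the Malliavin derivatives, fix $r$ and apply It\^o's formula to $V(X_t^n,\Dnrt)$ along the system $LS(r)$, whose generator is $L_{n,r}$. Taking expectations — legitimate after a standard localization, using (\ref{cL}) and the second bound of (\ref{r}) to see that $\int_r^{\cdot}\big(\partial_xV\,\sigma_n(X_s^n)+\partial_yV\,\Dnrs\,\sigma'_n(X_s^n)\big)\,dW_s$ is a genuine martingale — and inserting (\ref{L}) and (\ref{G}) gives
\begin{equation*}
\E\big[V(X_t^n,\Dnrt)\big]\ \le\ \E\big[V(X_r^n,\sigma_n(X_r^n))\big]+c\int_r^t\E\big[V(X_s^n,\Dnrs)\big]\,ds+c^2T .
\end{equation*}
Gronwall's inequality together with the first bound of (\ref{r}) then gives $\sup_n\sup_{r\le t\le T}\E[V(X_t^n,\Dnrt)]<\infty$; since $V$ is a Lyapunov function it dominates a fixed power of $\vert y\vert$, whence $\sup_n\sup_{r\le t\le T}\E[\vert\Dnrt\vert^p]<\infty$ and, integrating in $r$ via Minkowski, $\sup_n\sup_{0\le t\le T}\E[\Vert DX_t^n\Vert_H^p]<\infty$.

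Next I would identify the limit. Since $\bigcup G_n=\R$ and $b_n\equiv b$, $\sigma_n\equiv\sigma$ on $G_n$, the process $X^n$ solves (\ref{equa}) up to the exit time $\tau_n$, and pathwise uniqueness up to these times (from the semi-monotonicity (\ref{monoton}) of $b$, the global Lipschitzness of $f$, and the local Lipschitzness of $\sigma$) shows that the family $\{X^n\}$ is consistent as $n\to\infty$. Using this together with either the assumed a.s.\ convergence of $X_t^n$ or property (\ref{pn}), and the uniform $p$-moment bound of the first step (which supplies the uniform integrability needed to pass from convergence in probability to $L^p([0,T]\times\Omega)$-convergence), one obtains a process $X_t$ with $X_t^n\to X_t$ in $L^p([0,T]\times\Omega)$ satisfying (\ref{equa}); by pathwise uniqueness for (\ref{equa}) itself, $X_t$ is \emph{the} strong solution.

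Finally I would pass to the limit in the derivatives. By the previous steps $X_t^n\in\D^{1,p}$ with $\sup_n\big(\E[\vert X_t^n\vert^p]+\E[\Vert DX_t^n\Vert_H^p]\big)<\infty$ and $X_t^n\to X_t$ in $L^p$, so closedness of the operator $D$ (as in \cite{Nualart06}) yields $X_t\in\D^{1,p}$ and $DX_t^n\rightharpoonup DX_t$ weakly in $L^p(\Omega;H)$. To identify this weak limit and upgrade the convergence, I would note that $\Drt$ must satisfy the linearized equation
\begin{equation*}
\Drt=\sigma(X_r)+\int_r^t\big(b'(X_s)+f'(X_s)\big)\Drs\,ds+\int_r^t\sigma'(X_s)\Drs\,dW_s ,
\end{equation*}
whereas $\Dnrt$ solves the same equation with $b,\sigma$ replaced by $b_n,\sigma_n$; subtracting, applying It\^o to $\vert\Dnrt-\Drt\vert^p$, splitting the coefficient differences according to $\{t<\tau_n\}$ (where they vanish identically) and its complement, and bounding the latter by the uniform moments of the first step, the polynomial growth (\ref{polyno}), and (\ref{pn}), a Gronwall estimate gives $\Dnrt\to\Drt$ in $L^p$, hence the asserted convergence of $DX^n$ to $DX$.

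The hard part will be this last step: making the stability estimate for the linearized equations uniform in $n$, since $b_n,\sigma_n$ and their derivatives are uncontrolled outside $G_n$. This is precisely what the uniform $L^p$-bounds of the first step are designed to handle — with $p$ chosen large relative to the growth exponent $\xi$ of Hypothesis \ref{hypo1}, the contribution coming from the event $\{\tau_n\le t\}$, whose probability tends to $0$ by (\ref{pn}), is negligible, so the difference equation closes via Gronwall uniformly in $n$.
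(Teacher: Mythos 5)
Your core argument coincides with the paper's: both apply It\^o's formula to $V(X_t^n,\Dnrt)$ along the linearized system $LS(r)$, use (\ref{L})--(\ref{r}) and Gronwall to obtain $\sup_{n}\sup_{r\le t\le T}\E[V(X_t^n,\Dnrt)]<\infty$, and then invoke the closedness of $D$ (Lemma 1.2.3 of \cite{Nualart06}) applied to a sequence converging in $L^p$ with uniformly bounded Malliavin derivatives. You diverge in two places. First, to identify the limit when only (\ref{pn}) is available, the paper goes through the well-posedness of the martingale problem for $b+f$ and $\sigma$ (Corollary 10.1.2 and Theorem 10.1.3 of \cite{Stroock79}) to match the weak subsequential limit with the assumed strong solution, whereas you argue by consistency of the family $\{X^n\}$ up to the exit times together with pathwise uniqueness; both routes work, yours being more elementary but leaning on uniqueness facts for (\ref{equa}) that the paper establishes separately under Hypothesis \ref{hypo2}. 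Second, your final step --- upgrading the convergence of $DX^n$ to strong $L^p(\Omega;H)$ convergence via a stability estimate between the linearized SDEs --- goes beyond the paper's proof of this theorem, which stops at Lemma 1.2.3 and hence only delivers convergence of $DX^{n_k}$ in the weak topology of $L^p(\Omega;H)$; the strong convergence is only obtained later (Step 3 of Lemma \ref{firstderiv}) by a locality argument. Be aware that your version of this step is mildly circular as written: it presupposes that $\Drt$ solves the linearized equation (\ref{equalinear}), an identity that at this stage has not been established and is in fact usually derived \emph{from} the convergence $\Dnrt\to\Drt$. To make it self-contained, either run the same estimate between $\Dnrt$ and $D_rX_t^m$ to show the sequence is Cauchy in $L^p$, or use the locality of the Malliavin derivative as the paper does ($\Dnrt=\Drt$ on $\{t<\tau_n\}$) together with (\ref{pn}) and the uniform integrability supplied by your first step.
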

\begin{proof}
By  It\^o's formula, one can easily show that
\begin{equation*}
\sup_{n \geq 1}\sup_{r \leq t \leq T} \E\Big[ V(X_{t}^n, \Dnrt) \Big]   < \infty. 
\end{equation*}
Therefore, there exists a subsequence $(X_{t}^{n_k}, DX_t^{n_k})$ convergent weakly in $L^p(\Omega)\otimes L^p(\Omega; H)$ to some $(X_t, Y_t)$. By assumptions,
 if the sequence $X_t^n$ converges almost surely, then $X_t^{n_k} \longrightarrow X_t$ in $L^p(\Omega)$, and therefore  $\{X_t^{n_k}\}$ is uniformly integrable. Since the inequality (\ref{pn}) holds, by Corollary 10.1.2 and Theorem 10.1.3 in \cite{Stroock79} there exists a unique measure $P_0$ solving the  the martingle problem for $b+f$ and $\sigma$. Also, because this martingale problem is well-posed, there exists a unique weak solution $X_t$ for the SDE (\ref{equa}) which is  the strong solution which we assumed the SDE (\ref{equa}) has and  $P_0=P$. Now, according to Theorem 4.5.2. in \cite{Chung01}, we have a sequence $\{X_t^{n_k}\}$ which converges to $X_t$ in $L^p$  and $\sup_{n \geq 1}\E(\Vert \Dnrt \Vert_H^p) < \infty $. Therefore, by Lemma 1.2.3 in \cite{Nualart06}, $X_t \in \D^{1,p}$ and $DX_t^{n_k}\longrightarrow DX_t$ in $L^p(\Omega; H)$.
\end{proof}
For every $m \geq 2$ and $r=(r_1,\cdots, r_m)$ with $r_1 < r_2 < \cdots < r_m$,  let $kSm$ be the set of all permutations of $(r_1, \cdots, r_m)$ of the form $R=(R_{l_1},  \cdots,R_{l_k})$ where $l_0=0$ and
$R_{l_j}:=r_{i_{_{l_{j-1}+1}}}, \cdots, r_{i_{l_j}}$, such that $i_{l_{j-1}+1} < \cdots < i_{l_{j}}$ for every $1 \leq j \leq k$. Set 
$D_R X_t^n:=D_{R_{l_1}}X_t^n \cdots D_{R_{l_k}}X_t^n$ and consider the linearized system $LS(r_1, \cdots, r_m)$ defined by the form
\begin{equation}
\left\{ \begin{array}{lcr} 
 \displaystyle{\bigcup_{\substack{ kSm, \\
  2 \leq k \leq m}}}~ LS(r_{i_1}, \cdots, r_{i_k})    \qquad  0 \leq r_1 < \cdots < r_m \leq t\\
 \\
\begin{aligned} 
dD_{r_1 \cdots r_m} & X_t^n  = 
 \displaystyle{\sum_{\substack
{ kSm, \\
  1 \leq k \leq m}} } b_n^{(k)}(X_t^n) D_RX_t^n dt  +  \displaystyle{\sum_{\substack{ kSm, \\
  1 \leq k \leq m}}}  \sigma_n^{(k)}(X_t^n) D_RX_t^n dW_t , 
\end{aligned}
\end{array}  \right.
\end{equation}
\\
with infinitesimal operator $L_{n,(r_1, \cdots, r_m)}(x,y)$,  $y \in \R^{N_m}$, where $N_m= \sum_{1 \leq k \leq m}\sharp(kSm)$ and $\sharp(kSm)$ is the number of elements of  $kSm$. For the sake of simplicity set  
\begin{equation}
 A_{m,n} (X_t^n, Y_{t,m-1}^n) := \displaystyle{\sum_{\substack
{ kSm, \\
  2 \leq k \leq m}} } \sigma_n^{(k)}(X_t^n) D_R X_t^n.
\end{equation}
Here $Y_{t,m-1}^n$ is a random vector in $\R^{N_m-1}$ which components are  the  Malliavin derivatives of $X_t^n$ up to the order $m-1$, appeared in the system $LS(r_1, \dots, r_m)$. \\
The diffusion coefficient in this system will be denoted by 
\begin{equation}
\sigma_{m,n}(X_t^n)=\begin{pmatrix}
\sigma_{m,n}^0(X_t^n) \\
 \sigma'_n(X_t^n) D_{r_1 \cdots r_m} X_t^n + A_{m,n}(X_t^n, Y_{t,m-1}^n)
\end{pmatrix}
\end{equation}
Our next result states  that if  a Lyapunov function $V(.)$ is constructed for the system $LS(r)$, then for every $m \geq 2$ another one could be constructed  for the linearized system $LS(r_1, \dots, r_m)$.  
\begin{thm}\label{mlay}
Suppose that the assumptions in Theorem \ref{2lay}  hold, and  for each $y \in \R^{N_m}$, define the Lyapunov function $V_m(x, y, z)=V(x,z)$. If for some constant $c_0$ the following inequalities hold:
\begin{equation}\label{cLm}
\vert \partial_{xz} V_m(x,y,z) \vert^2+\vert \partial_z V_m(x,y,z) \vert^2+ \vert \partial_{zz} V_m(x,y,z) \vert^2 \leq c_0 V(x,z) + c_0F(x) +c_0 ,
\end{equation}
\begin{equation}\label{rm}
\sup_{n \geq 1}\E\Big[V_m(X_{r_m}^n, Y_{r_m,m-1}^n , \sigma^0_{m,n}(X_{r_m}^n))\Big] \leq c_m,
\end{equation}
\begin{equation}\label{xym}
\sup_{n \geq 1}\sup_{0 \leq t \leq T}\E\Big[ \vert X_t^n \vert^q + \vert Y_{t,m-1}^n \vert^q  \Big] < c_0,  \qquad  {1 \leq q \leq 2(m+1)},
\end{equation}
then 
\begin{equation*}
\sup_{n \geq 1}\sup_{r \leq t \leq T} \E\Big[ V_m(X_{t}^n, Y_{t,m-1}^n, D_{r_1, \cdots, r_m}X_t^n) \Big]   < \infty 
\end{equation*}
\end{thm}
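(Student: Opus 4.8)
The plan is to mimic the proof of Theorem \ref{2lay}, applying It\^o's formula to the process $V_m(X_t^n, Y_{t,m-1}^n, D_{r_1\cdots r_m}X_t^n)$ and showing that the resulting drift is controlled by a Gronwall-type inequality whose coefficients are uniform in $n$. First I would write out the generator $L_{n,(r_1,\cdots,r_m)}$ acting on $V_m$. Because $V_m(x,y,z)=V(x,z)$ does not depend on the middle block $y$ of lower-order derivatives, the derivatives $\partial_y V_m$ and $\partial_{yy}V_m$ vanish, so only the $x$-, $z$-, and mixed $xz$-terms survive; the drift contributions coming from the $y$-equations enter only through the cross term $\partial_{xz}V_m$ paired with the diffusion coefficients. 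Thus $L_{n,(r_1,\cdots,r_m)}V_m(x,y,z)$ splits into the piece $L_{n,r}V(x,z)$ (which by \eqref{L} is $\leq c(V+G+h)$) plus correction terms that all involve $A_{m,n}(x,y)$ (the lower-order Malliavin-derivative products sitting in the diffusion and drift of the top equation), multiplied against $\partial_z V_m$, $\partial_{zz}V_m$, or $\partial_{xz}V_m$.

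Next I would bound those correction terms. Each such term has the schematic form (a polynomial in $x$ times a product of lower-order Malliavin derivatives $D_R X_t^n$ with $2\le k\le m$) multiplied by one of $\partial_z V_m,\partial_{zz}V_m,\partial_{xz}V_m$. Using \eqref{cLm} I would replace the second factor by $c_0 V(x,z)+c_0 F(x)+c_0$, and then apply Young's inequality to peel it apart: one gets a term $\lesssim V(x,z)$ (absorbed into the Gronwall coefficient), plus terms of the form (polynomial in $x$) $\times$ (product of lower-order derivatives), with no $z$-dependence left. Those remaining terms have expectation uniformly bounded in $n$ and $t$ by \eqref{xym} — the moment bound $1\le q\le 2(m+1)$ is precisely calibrated so that products of up to $m$ factors of $Y_{t,m-1}^n$ together with a polynomial weight in $X_t^n$ (of degree controlled by $\xi$ through \eqref{polyno}) are integrable — possibly after also invoking \eqref{supF} for the $F(X_t^n)$ pieces. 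Combining, I obtain
\[
\E\big[V_m(X_t^n,Y_{t,m-1}^n,D_{r_1\cdots r_m}X_t^n)\big]
\le \E\big[V_m(X_{r_m}^n,\cdots,\sigma^0_{m,n}(X_{r_m}^n))\big]
+ \int_{r_m}^t \big(c\,\E[V_m(\cdots)](s) + C\big)\,ds,
\]
where the martingale part of the It\^o expansion has zero expectation (after the usual localization by stopping times $\tau_n$ or $T_m$, using \eqref{cLm} to see the local martingale's quadratic variation is integrable), the $G$-term drops by the first identity in \eqref{G}, the $h$-term and the correction terms are absorbed into the constant $C$ via \eqref{G}, \eqref{supF}, \eqref{xym}, and the initial value is bounded by $c_m$ via \eqref{rm}.

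Finally, applying the version of Gronwall's inequality valid for possibly-signed coefficients (the one cited from \cite[Lemma 1.1]{Hasminskii12}) on $[r_m,T]$ yields a bound on $\E[V_m(X_t^n,Y_{t,m-1}^n,D_{r_1\cdots r_m}X_t^n)]$ that depends only on $c$, $C$, $c_m$, and $T$ — in particular not on $n$ — and taking the supremum over $r_m\le t\le T$ and over $n\ge 1$ gives the claim. I expect the main obstacle to be the bookkeeping in the second step: carefully enumerating every term produced by the generator $L_{n,(r_1,\cdots,r_m)}$ acting on the vector-valued diffusion $\sigma_{m,n}$ — especially the cross terms between $\sigma'_n(X_t^n)D_{r_1\cdots r_m}X_t^n$ and $A_{m,n}(X_t^n,Y_{t,m-1}^n)$ inside $\partial_{zz}V_m$ — and verifying that after the Young-inequality splitting each leftover factor genuinely falls under the moment hypothesis \eqref{xym} with the stated exponent range, rather than requiring a higher moment. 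The role of \eqref{cL} and the second bound in \eqref{r} is a secondary check needed to justify that the stochastic integral in the It\^o expansion is a true martingale after localization.
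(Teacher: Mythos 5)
Your proposal is correct and follows essentially the same route as the paper: the paper simply packages your It\^o/Gronwall argument as a verification of the hypotheses of Theorem \ref{2lay} for the augmented system, establishing the generator bound by the same decomposition $L_{n,(r_1,\dots,r_m)}V_m = L_{n,r}V + (\partial_z V)(B_{m,n}+A_{m,n}) + (\partial_{xz}V)A_{m,n} + \tfrac12(\partial_{zz}V)A_{m,n}^2$, then Young's inequality, \eqref{cLm}, and \eqref{xym} to control the leftover terms $B_{m,n}^2 + A_{m,n}^2 + A_{m,n}^4$. The bookkeeping concern you flag about the exponent range in \eqref{xym} is real but is glossed over in the paper as well.
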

\begin{proof}
According to Theorem \ref{2lay}, it is sufficient to show that
for every $m \geq 1$, there exist some function $F_m$ and a constant $c_m$ such that 
\begin{equation}\label{Lm}
L_{n, (r_1, \dots, r_m)} V_m(x, y, z) \leq c_m\Big( V_m(x, y, z)+G(x)+F_m(x,y) \Big),
\end{equation}
\begin{equation}\label{Fm}
\sup_{n \geq 1} \sup_{0 \leq t \leq T}\E[F_m(X_t^n,Y_{t,m-1}^n )] < \infty ,
\end{equation}
\begin{equation}\label{Km}
\sup_{0 \leq t \leq T}\E\Big[\nabla V_m(X_t^n, Y_{t,m-1}^n, D_{r-1 \cdots r_m}X_t^n).\sigma_{m,n}(X_t^n)\Big] < \infty.   
\end{equation}
We have \\
$\begin{aligned}
L_{n, (r_1, \dots, r_m)} V_m(x,y,z) &= L_{n,r_1}V(x,z) + \partial_z V(x,z) \Big( B_{m,n}(x,y) +A_{m,n}(x,y) \Big) \\
 & + \partial_{xz} V(x,z)   A_{m,n}(x,z)+ \frac12   \partial_{zz} V(x,z)\Big(A_{m,n}(x,z) \Big)^2  \\
& \leq cV(x,z) +cG(x)+cF(x) +( \partial_{z} V(x,z) )^2+( \partial_{xz} V(x,z) )^2  \\
& +  \frac14  ( \partial_{zz} V(x,z) ) ^2 + \Big(B_{m,n}(x,y) \Big)^2+\Big(A_{m,n}(x,y) \Big)^2 + \Big(A_{m,n}(x,y) \Big)^4, 
\end{aligned}$\\
where in the last inequality we have used (\ref{L}). Let 
$$ F_m(x,y) := (c+c_0)F(x) + \Big(B_{m,n}(x,y) \Big)^2+\Big(A_{m,n}(x,y) \Big)^2 + \Big(A_{m,n}(x,y) \Big)^4 +c_0,  $$
applying (\ref{cLm}), we get
\begin{equation}
L_{n, (r_1, \dots, r_m)} V_m(x,y,z) \leq (c+3c_0)V_m(x,y,z) + cG(x) + F_m(x,y).
\end{equation}
Since all  of the derivatives of $b$ and $\sigma$ have polynomial growths, (\ref{xym}) is deduced from  inequality (\ref{Fm}).  
By definition of the function $V_m$, we know that 
$$\nabla V_m(x,y,z).\sigma_{m,n}(x)= \partial_x V(x,z) \sigma_n(x) + \partial_z V(x,z) z \sigma'_n(x) +  \partial_z V(x,z) A_{m,n}(x,y),$$
using (\ref{xym}) and (\ref{cLm}) once again we conclude that 
\begin{equation*}
\sup_{0 \leq t \leq T}\E\Big[\partial_z V_m(X_t^n, D_{r_1 \cdots r_m}X_t^n)A_{m,n}(X_t^n, Y_{t,m-1}^n )\Big] < \infty.   
\end{equation*}
By (\ref{r}) and (\ref{cL}),  inequality (\ref{Km}) results and the proof is completed.
\end{proof}
By a bit more calculations we can  derive the same result   when  $(r_1, \cdots, r_m)$ has no increasing components. 
\begin{cor}
If the assumptions in Theorems \ref{2lay} and \ref{mlay} hold for every integer $m \geq 1$ and $p,q \geq 1$, then $X_t \in \D^{\infty}$
\end{cor}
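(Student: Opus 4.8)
The plan is to proceed by induction on the order $m\ge 1$ of Malliavin differentiation, and at each step to combine Theorem \ref{mlay} (which produces a uniform bound on the $p$-th moments of the $m$-th Malliavin derivatives of the approximating solutions $X^n_t$) with the standard closability/weak-compactness criterion for membership in $\D^{m,p}$. Concretely, I would carry through the induction the two statements: (i) $X_t\in\D^{m,p}$ for every $p\ge1$ and every $t\in[0,T]$; and (ii) $\sup_{n\ge1}\sup_{0\le t\le T}\E[\Vert D^{j}X^n_t\Vert_{H^{\otimes j}}^{q}]<\infty$ for all $0\le j\le m$ and all $q\ge1$. Once both hold for every $m$, the corollary is immediate since $\D^\infty=\bigcap_{m\ge1}\bigcap_{p\ge1}\D^{m,p}$.

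For the base case $m=1$ I would invoke Theorem \ref{2lay} directly: it already gives $X_t\in\D^{1,p}$ and $X^n_t\to X_t$ in $L^p([0,T]\times\Omega)$, while statement (ii) for $j=1$ follows from the uniform bound $\sup_n\sup_t\E[V(X^n_t,D_rX^n_t)]<\infty$ established at the start of that proof together with the Lyapunov lower bound $V(x,z)\ge c_0\vert z\vert^{p}$; the case $j=0$ is (\ref{supF}). It is here that one uses the hypothesis that a Lyapunov function is available for every $p$, so that the moment bounds hold for every exponent. For the inductive step, assuming (i) and (ii) at level $m-1$, I would fix $0\le r_1<\cdots<r_m\le t$ and apply Theorem \ref{mlay} to the system $LS(r_1,\dots,r_m)$: its lower-order component $Y^n_{t,m-1}$ is a vector of Malliavin derivatives of $X^n_t$ of orders $\le m-1$, so the induction hypothesis supplies exactly the moment bound (\ref{xym}) (in particular up to order $2(m+1)$); together with the assumed structural estimates (\ref{cLm}) and (\ref{rm}) this yields $\sup_n\sup_{r\le t\le T}\E[V_m(X^n_t,Y^n_{t,m-1},D_{r_1\cdots r_m}X^n_t)]<\infty$, hence $\sup_n\sup_t\E[\Vert D^{m}X^n_t\Vert_{H^{\otimes m}}^{p}]<\infty$ for every $p$ via $V_m(x,y,z)=V(x,z)\ge c_0\vert z\vert^{p}$.

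Combining this with the level-$(m-1)$ bounds gives $\sup_n\Vert X^n_t\Vert_{\D^{m,p}}<\infty$; since $X^n_t\to X_t$ in $L^p$, the higher-order analogue of Lemma 1.2.3 in \cite{Nualart06} (closability of the iterated Malliavin derivatives plus weak sequential compactness in $L^p(\Omega;H^{\otimes m})$), used exactly as at the end of the proof of Theorem \ref{2lay}, then forces $X_t\in\D^{m,p}$ and, along a subsequence, $D^{m}X^n_t\to D^{m}X_t$ weakly in $L^p(\Omega;H^{\otimes m})$. Tuples $(r_1,\dots,r_m)$ with no prescribed ordering reduce to the increasing case by the symmetry of $D_{r_1\cdots r_m}$ in its indices, as remarked before the statement; this closes the induction, and intersecting over $m$ and $p$ finishes the argument.

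The main obstacle I anticipate is not a single estimate but the bookkeeping needed to keep the induction self-feeding: applying Theorem \ref{mlay} at level $m$ requires the uniform moments (\ref{xym}) of the \emph{entire} vector $Y^n_{t,m-1}$ of lower-order derivatives, and up to the rather high order $2(m+1)$, so one must be careful to propagate the bounds in (ii) for all $q$ — not merely for the single exponent attached to the current Lyapunov function — which is precisely why the Lyapunov constructions coming from Hypothesis \ref{hypo2} must be available for every $p\ge1$. The second delicate point is combinatorial: bounding the sums over the permutation families $kSm$ and the products $D_RX^n_t$ that enter $LS(r_1,\dots,r_m)$ and $A_{m,n}$ requires Hölder's inequality together with the polynomial growth of the derivatives of $b_n$ and $\sigma_n$, and it is there that the high-order moment bounds in (\ref{xym}) get consumed.
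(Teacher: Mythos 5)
Your induction on the order $m$, combining Theorem \ref{2lay} as the base case with Theorem \ref{mlay} for the inductive step and closing each level via the uniform moment bounds, weak compactness, and Lemma 1.2.3 of \cite{Nualart06}, is exactly the argument the paper intends (the corollary is stated without an explicit proof, and this is the natural way the two theorems combine). Your attention to propagating the moment bounds (\ref{xym}) for all exponents $q$ and to the reduction of non-increasing index tuples by symmetry matches the paper's remarks, so the proposal is correct and takes essentially the same approach.
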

It is worth noting  that the required conditions are not too restrictive. In the next section we will show that by Hypothesis \ref{hypo2}, they are satisfied and the solution of (\ref{equa}) is Malliavin differentiable of any order. 
%******************************************8
\section{Malliavin Differentiability under Hypothesis \ref{hypo2}}
It is well-known that under Hypothesis \ref{hypo2} the SDE (\ref{equa}) has a strong solution $\{X_t\}$ \cite{Mao97}. The uniqueness of the solution is obtained by using It\^o's formula and Gronwall's inequality. In this section, we will show that this solution is in $\D^{\infty}$. To this end, we first show that $X_t \in L^P(\Omega)$,  does not  explode in finite time, and the process $\sup_{0 \leq s \leq t} X_s$ has bounded moments. Then, we construct an almost everywhere convergent sequence of processes $X^n_t$ whose limit is $X_t$. For every $p \geq 2$, we will find some function $V_p$ such that conditions (\ref{supF}), (\ref{L}) and (\ref{r}) hold, so that  $X_t \in \D^{1,p}$. We construct Lyapunov functions with  polynomial growths thus satisfying (\ref{rm}) and conclude that $X_t \in \D^{\infty}$. This procedure is followed in the next subsections.\\
In what follows  $G_n\subseteq\R$, is set 
 \begin{equation}
 G_n=\Big\{ x \in \R; \quad  \vert x \vert \geq n^\xi \Big\}
 \end{equation}
for each $n \geq 1$, and $\tau_{n}$ is the first exit time of $X_t$ from $G_n$. 
\subsection{Some Properties of the solution}

\begin{lem}\label{xlp}
For each $t \in [0,T]$ and $p > 1$, $X_{t}$ belongs to $L^p(\Omega)$ and does not explode in finite time.
\end{lem}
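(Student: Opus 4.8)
The plan is to derive a uniform a~priori moment bound for the solution up to its explosion time by a localisation argument based on It\^o's formula, and then to read off both non--explosion and $L^p$--integrability from that bound. By the standard theory for locally Lipschitz coefficients, the SDE~(\ref{equa}) possesses a unique strong solution defined up to an explosion time $\tau_\infty=\lim_n\tau_n$, where $\tau_n=\inf\{t\ge 0:\ |X_t|\ge n\}$; the content of the lemma is that $\tau_\infty=\infty$ almost surely and that the solution has finite moments of every order. Fix $p\ge 1$ and set $\varphi_p(x)=(1+x^2)^p$. On the stochastic interval $[0,\tau_n]$ the process $X$ stays in a bounded set on which $b,f,\sigma$ and $\varphi_p'$ are bounded, so the stopped stochastic integral arising in It\^o's formula for $\varphi_p(\Xtn)$ is a genuine martingale and $\E[\varphi_p(\Xtn)]=\varphi_p(x_0)+\E\int_0^{t\wedge\tau_n}(\Lt\varphi_p)(X_s)\,ds$.

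The next step is to bound the generator pointwise. Expanding, $(\Lt\varphi_p)(x)=2p(1+x^2)^{p-1}\big(x b(x)+x f(x)+\tfrac12\sigma^2(x)+(p-1)\sigma^2(x)\tfrac{x^2}{1+x^2}\big)$, and using $\tfrac{x^2}{1+x^2}\le 1$ this is at most $2p(1+x^2)^{p-1}\big(x b(x)+x f(x)+(p-\tfrac12)\sigma^2(x)\big)$. Since $\sigma^2\ge 0$, $(4p-1)|x\sigma'(x)|^2\ge 0$ and $12p-1\ge p-\tfrac12$ for $p\ge 1$, the key estimate~(\ref{Bound}) of Hypothesis~\ref{hypo2} gives $x b(x)+(p-\tfrac12)\sigma^2(x)\le\alpha_p+\beta_p x^2$, while the global Lipschitz property of $f$ (Hypothesis~\ref{hypo1}, constant $k_1$) gives $x f(x)\le\tfrac12|f(0)|^2+(k_1+\tfrac12)x^2$. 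Collecting terms yields $(\Lt\varphi_p)(x)\le 2p(1+x^2)^{p-1}(\gamma_p+\delta_p x^2)$ for suitable $\gamma_p\ge 0$ and $\delta_p\in\R$, and since $\gamma_p+\delta_p x^2\le(\gamma_p+\delta_p^{+})(1+x^2)$ one gets $(\Lt\varphi_p)(x)\le C_p\,\varphi_p(x)$ with $C_p=2p(\gamma_p+\delta_p^{+})$. (If one prefers to retain the possibly negative $\delta_p$, the variant of Gronwall's inequality in \cite[Lemma~1.1]{Hasminskii12} applies directly and gives the same conclusion.)

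Then I would substitute this bound into the identity above to get $\E[\varphi_p(\Xtn)]\le\varphi_p(x_0)+C_p\int_0^t\E[\varphi_p(X_{s\wedge\tau_n})]\,ds$, and Gronwall's lemma yields $\sup_{n\ge 1}\sup_{0\le t\le T}\E[\varphi_p(\Xtn)]\le\varphi_p(x_0)\,e^{C_pT}<\infty$. On $\{\tau_n\le t\}$ we have $|\Xtn|=n$, so Chebyshev's inequality gives $P(\tau_n\le t)\le n^{-2p}\varphi_p(x_0)e^{C_pT}$, which tends to $0$ as $n\to\infty$; hence $P(\tau_\infty\le T)=0$, i.e.\ $X$ does not explode on $[0,T]$, and since $T$ was arbitrary not in finite time. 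On the full--measure event $\{\tau_\infty>T\}$ we have $\Xtn\to X_t$ almost surely, so Fatou's lemma gives $\E[(1+X_t^2)^p]\le\varphi_p(x_0)e^{C_pT}<\infty$; as $p\ge 1$ was arbitrary, $X_t\in L^q(\Omega)$ for every $q>1$.

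The argument is largely routine; the part calling for care is the constant bookkeeping in the second step -- verifying that the coefficients produced by It\^o's formula applied to $(1+x^2)^p$ are genuinely dominated by those in~(\ref{Bound}) (the $\sigma'$ term, which does not occur for $\varphi_p$, only helps), together with the check that the localised stochastic integral is a true martingale rather than merely a local one. Managing the sign of $\beta_p$, equivalently of $\delta_p$, is precisely why Hypothesis~\ref{hypo2} is paired with the Gronwall inequality of \cite{Hasminskii12}, although for the present statement the crude estimate with the positive constant $C_p$ is enough.
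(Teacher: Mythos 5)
Your proof is correct and follows essentially the same route as the paper: localise at the exit times $\tau_n$, apply It\^o's formula to a polynomial Lyapunov function, dominate the generator via inequality (\ref{Bound}) together with the Lipschitz bound on $f$, close with Gronwall, and then obtain non-explosion by Chebyshev and the moment bound by Fatou. The only difference is your choice of $(1+x^2)^p$ in place of the paper's $x^{2p}$, which tidily absorbs the lower-order term $\alpha'_p\E[X^{2p-2}]$ and thereby dispenses with the paper's induction on $p$ and the interpolation inequality, but this is a streamlining rather than a different argument.
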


\begin{proof}
Using Fatou's lemma, we first show that  
$X_t$ is in $L^p(\Omega)$. By  definition of the operator $\Lt$ and inequality (\ref{Bound}), we have

$\begin{aligned}
\Lt X_t^{2p} & = 2p X_t^{2p-1}\Big(b(X_t)+f(X_t) \Big)+ p(2p-1)\sigma^2(X_t) X_t^{2p-2}\\
    &
  %   \leq 2p \Big(\alpha_p+\beta_p X_t^{2}\Big)X_t^{2p-2} 
  \leq 2p\beta_p X_t^{2p}+ 2p\alpha_p X_t^{2p-2} + 2p X_t^{2p-2} X_t f(X_t) \\
  & \leq 2p\beta_p X_t^{2p}+ 2p\alpha_p X_t^{2p-2} + 2p X_t^{2p-2} \Big( \frac{X_t^2}{2}+ \frac{f^2(X_t)}{2}  \Big) \\
  & \leq p(2\beta_p+2k_1^2+1) X_t^{2p} + 2p(\alpha_p+\vert f(0) \vert^2) X_t^{2p-2} =: \beta'_p X_t^{2p} + \alpha'_p  X_t^{2p-2},
\end{aligned}$\\
\\
where in the last inequality we used the Lipschitz property of $f$.
Applying It\^o's formula,

\begin{equation}\label{induc}
\frac{d}{dt} \E\Big[X_{t \wedge \tau_n}^{2p} \Big] = \E\Big[ \Lt X_{t \wedge \tau_n}^{2p} \Big]  \leq   \beta'_p \E\Big[X_{t \wedge \tau_n}^{2p} \Big] + \alpha'_p \E\Big[X_{t \wedge \tau_n}^{2p-2} \Big].
\end{equation}
Setting $p=1$ and using Gronwall's inequality in \cite{Hasminskii12}, we derive

\begin{equation*}
 \E\Big[X_{t \wedge \tau_n}^{2} \Big] \leq  \Big(\vert x_0 \vert^2+ \frac{\alpha_1}{\beta_1}\Big)exp\{3\beta_1 T\}-\frac{\alpha_1}{\beta_1}
\end{equation*}
By (\ref{polyno}), we have 
\begin{equation*}
\Big({\frac{n}{2}-1}\Big)^{\frac{1}{q_0+1}} P\Big(t \geq \tau_n\Big)  \leq   \Big(\vert x_0 \vert^2+ \frac{\alpha_1}{\beta_1}\Big)exp\{3\beta_1 T\}-\frac{\alpha_1}{\beta_1}
\end{equation*}
Letting $n$ tend to $\infty$, we conclude that $lim_{n \rightarrow \infty}\tau_{n}=\infty$ almost surely and thus by Fatou's lemma 
\begin{center}
$\E[X_t^2]  \leq \E\big[\displaystyle \liminf_{n \rightarrow \infty} X_{t \wedge \tau_{n}}^2\big]
                        \leq  \displaystyle \liminf_{n \rightarrow \infty} \E\big[ X_{t \wedge \tau_{n}}^2\big]
                        \leq \Big(x_0^2+ \frac{\alpha_1}{\beta_1}\Big)exp\{3\beta_1 T\}-\frac{\alpha_1}{\beta_1}.$
\end{center}
Hence from (\ref{induc}) and induction on $p$ we get $X_{t}\in L^{2p}(\Omega)$. Now by the following interpolation inequality 
$$ \E\Big[\vert X_t \vert^{2p+1}\Big] \leq \Big(\E[X_t^{2p}]\Big)^{\frac12} \Big(\E[X_t^{2p+2}]\Big)^{\frac12},$$ 
we conclude that  $X_{t}\in L^p(\Omega)$ for every $p > 1$.
\end{proof}

In the proof of the next Lemma we will use the following version of the Young's inequality. 
For $r \geq 2$ and for all $a,c$ and $\triangle_1 >0$, we have:
\begin{equation}\label{young}
a^{r-2}c^2 \leq \triangle_1^2\frac{r-2}{r}a^r+\frac{2}{r\triangle_1^{r-2}}c^r.
\end{equation}

\begin{lem}\label{io}
 For every $p \geq 2$, if $\beta_p \geq 0$, there exists some constant $c'_p$ such that
\begin{equation}
\E\Big[\sup_{0 \leq t \leq T}\vert X_t \vert^p \Big] < c'_p
\end{equation}
\end{lem}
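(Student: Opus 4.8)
The plan is to run the standard moment--supremum argument: apply It\^o's formula to a power of the solution stopped at $\tau_n$, control the drift with (\ref{Bound}) and the Lipschitz property of $f$, treat the resulting martingale with the Burkholder--Davis--Gundy inequality together with the Young inequality (\ref{young}) so that the supremum can be absorbed into the left-hand side, and then close the estimate with Gronwall's inequality and Fatou's lemma as $n\to\infty$.

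Fix $p\ge2$ and, to sidestep the non-smoothness of $x\mapsto|x|^{p}$ at the origin, set $\varphi(x)=(1+x^{2})^{p}$, so that $|x|^{p}\le\varphi(x)$; apply It\^o's formula to $\varphi(X_{t\wedge\tau_n})$. Since the stopped process does not leave $G_n$, the quantity $\sup_{0\le t\le T}|X_{t\wedge\tau_n}|$ is bounded, so every expectation written below is a priori finite for each fixed $n$ --- this is what legitimises the absorption step later. Expanding $\Lt\varphi$, the second-order term $\frac12\sigma^{2}\varphi''$ is dominated by $p(2p-1)\sigma^{2}(1+x^{2})^{p-1}$, which is more than compensated by the negative contribution $-2p(12p-1)\sigma^{2}(1+x^{2})^{p-1}$ produced by $xb(x)$ via (\ref{Bound}); combined with $xf(x)\le\frac12 x^{2}+\frac12 f(x)^{2}$ and the Lipschitz bound on $f$, this gives a pointwise inequality $\Lt\varphi(x)\le\beta'_{p}\,\varphi(x)$ for a suitable constant $\beta'_{p}\ge0$ (the restriction $\beta_{p}\ge0$ in the statement, together with $\alpha_{p}\ge0$, is what keeps this constant of the right sign).

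The martingale part of $\varphi(X_{t\wedge\tau_n})$ is $M_t=\int_0^{t\wedge\tau_n}\varphi'(X_s)\sigma(X_s)\,dW_s$, whose quadratic variation is bounded by $C_p\int_0^{t\wedge\tau_n}\sigma^2(X_s)(1+X_s^2)^{2p-1}\,ds$. Using $(1+X_s^2)^{2p-1}\le\big(\sup_{u\le s\wedge\tau_n}\varphi(X_u)\big)(1+X_s^2)^{p-1}$, applying BDG, and then splitting the resulting product with the Young inequality (\ref{young}) and a small parameter $\varepsilon>0$, one bounds $\E\big[\sup_{t\le T}|M_t|\big]$ by $\varepsilon\,\E\big[\sup_{t\le T}\varphi(X_{t\wedge\tau_n})\big]$ plus a constant times $\E\big[\int_0^{T}\sigma^2(X_s)(1+X_s^2)^{p-1}\,ds\big]$. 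By the polynomial growth of $\sigma$ the last integrand is a polynomial in $X_s$, so by the moment bounds of all orders established (uniformly in $t\in[0,T]$) in the proof of Lemma \ref{xlp} it is finite, with a bound independent of $n$. Collecting the drift bound from the previous paragraph, choosing $\varepsilon<1$ and absorbing the supremum, one arrives at
\begin{equation*}
\E\Big[\sup_{0\le t\le T}\varphi(X_{t\wedge\tau_n})\Big]\le c+c\int_0^T\E\Big[\sup_{0\le u\le s}\varphi(X_{u\wedge\tau_n})\Big]\,ds,
\end{equation*}
with $c$ independent of $n$; Gronwall's inequality then yields $\E[\sup_{t\le T}\varphi(X_{t\wedge\tau_n})]\le c'_p$, uniformly in $n$. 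Since $\tau_n\uparrow\infty$ almost surely by Lemma \ref{xlp}, $\sup_{t\le T}\varphi(X_{t\wedge\tau_n})\uparrow\sup_{t\le T}\varphi(X_t)$, so Fatou's lemma gives $\E[\sup_{t\le T}|X_t|^p]\le\E[\sup_{t\le T}\varphi(X_t)]\le c'_p$.

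I expect the only genuinely delicate point to be the bookkeeping around the absorption: one must first observe, from the boundedness of the stopped process, that $\E[\sup_{t\le T}\varphi(X_{t\wedge\tau_n})]$ is finite \emph{before} moving the $\varepsilon$-term to the left, and one must keep every constant independent of $n$ so that the final passage to the limit is valid. The drift estimate via (\ref{Bound}), the BDG/Young manipulation, and Gronwall's inequality are otherwise routine once the uniform moment bounds from Lemma \ref{xlp} are in hand.
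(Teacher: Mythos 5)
Your proposal is correct and follows essentially the same route as the paper: It\^o's formula applied to a $p$-th power of the solution, the drift estimate from (\ref{Bound}), BDG plus Young's inequality to absorb the supremum of the martingale part into the left-hand side, and Gronwall's inequality to close. The only substantive differences are refinements of that same argument --- you localize with $\tau_n$ so that the absorption step is a priori legitimate (the paper moves $\tfrac12\E[\sup_t|X_t|^{2p}]$ to the left without first checking its finiteness, so your version is the more careful one), and you control the leftover BDG integrand $\sigma^2(X_s)(1+X_s^2)^{p-1}$ via the polynomial growth of $\sigma$ together with the moment bounds of Lemma \ref{xlp}, where the paper instead feeds it back into the $(12p-1)\sigma^2$ term built into Hypothesis (\ref{Bound}).
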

\begin{proof} 
We know that for every $p \geq 2$, $\E[\vert X_t \vert^p] < \infty$. Applying It\"o's formula and the Burkholder-Davis-Gundy inequality in \cite{Ren08}, we have \\
\begin{align}
\E\Big[\sup_{0 \leq t \leq t_1}\vert X_t \vert^{2p}\Big] & =\E[x_0^{2p}]+\E\Big[\sup_{0 \leq t \leq t_1}\int_{0}^{t} \Lt X_s^{2p}ds\Big]  +2p\E\Big[\sup_{0 \leq t \leq t_1}\int_0^t X_s^{2p-1}\sigma(X_s)dW_s\Big]  \nonumber\\
& \leq \E[x_0^{2p}]+\E\Big[\sup_{0 \leq t \leq t_1}\int_{0}^{t} \Lt X_s^{2p}ds\Big]+6p\E\Big[\sup_{0 \leq t \leq t_1}\vert \int_{0}^{t} \vert X_s \vert^{4p-2}\sigma^2(X_s) ds\vert^\frac12\Big]   \nonumber\\
 & \leq \E[x_0^{2p}]+\E\Big[\sup_{0 \leq t \leq t_1}\int_{0}^{t} \Lt X_s^{2p}ds\Big]  \nonumber\\
 &+6p\E\Big[\sup_{0 \leq t \leq t_1}\vert X_s \vert^{p} \sup_{0 \leq t \leq t_1}\vert \int_{0}^{t} \vert X_s \vert^{2p-2}\sigma^2(X_s) ds\vert^\frac12\Big]  \nonumber\\
& \leq \E[x_0^{2p}]+\E\Big[\sup_{0 \leq t \leq t_1}\int_{0}^{t} \Lt X_s^{2p}ds]+\frac{1}{2}\E[\sup_{0 \leq t \leq t_1}\vert X_s \vert^{2p}\Big]   \nonumber\\
&+\frac{36p^2}{2}\E\Big[ \sup_{0 \leq t \leq t_1} \int_{0}^{t} \vert X_s \vert^{2p-2}\sigma^2(X_s) ds\Big], \label{supc}
\end{align}
where the last inequality holds by   $ac \leq \frac{L}{2}a^2 + \frac{1}{2L}c^2$ for $L=6p$. 
By inequality (\ref{Bound}) in (\ref{supc}), we derive
\begin{equation}\label{2pc}
(1-\frac{1}{2C})\E\Big[\sup_{0 \leq t \leq t_1}\vert X_t \vert^{2p}\Big] \leq \E[x_0^{2p}]+\E\Big[\sup_{0 \leq t \leq t_1}\int_{0}^{t} [\alpha_p\vert X_s \vert^{2p-2}  +\beta_p \vert X_s \vert^{2p}] ds\Big].  
\end{equation}
Using (\ref{young}) for $r=2p$ with $c=\triangle_1=1$, we have 
$x^{2p-2} \leq \frac{p-1}{p}x^{2p}+\frac{1}{p}.$
Substituting this latter inequality in (\ref{2pc}), we  deduce that
\begin{align}
(1-\frac{1}{2C})\E\Big[\sup_{0 \leq t \leq t_1}\vert X_t \vert^{2p}\Big] &  \leq \E[x_0^{2p}]+\frac{\alpha_p}{p}t_1+\int_{0}^{t_1}(\alpha_p\frac{p-1}{p}+\beta_p)\E\Big[\sup_{0 \leq s \leq t_1}\vert X_s \vert^{2p}\Big]ds,  
\end{align}
and Gronwall's inequality completes the proof.
\end{proof}

%***********************************\subsection
\subsection{Approximation of the solution}
For every integer $n >0$ let choose some smooth functions $\varphi_{n}:\R \rightarrow \R$ such that $\phi_{n}=1$ on $A_{n}:=\{x \in \R;~\mid
x\mid \leq n^{\xi} \}$ and $\phi_{n}=0$ outside  $A_{2n^{\xi}}$. Also for each multiindex $L$ with $\vert L \vert =l\geq 1$,
\begin{equation}\label{supderbphi}
\sup_{_{n,x}}\Big( \vert \partial_{_{L}} \phi_n \vert +\vert \langle b , \partial_{L} \phi_{n} \rangle \vert +\vert \sigma \partial_{L} \phi_{n} \vert \Big)\leq M_{l}
\end{equation}
 for some $M_{l} > 0$. (See Appendix and the proof of Lemma 2.1.1 in \cite{Nualart06}).
  Now, set
$$b_{n}(x):=\phi_{n}(x)b(x) ~~ and  ~~ \sigma_n(x):= \phi_{n}(x)\sigma(x) $$ for every $x \in \R^d$ and  $n >0$. Then $b_{n}$ would be globally Lipschitz and continuously differentiable. By (\ref{polyno}) for each $x \in \R^d$ and each multiindex $L$ with $\vert L \vert = l \geq 1$, there exist positive constants $\Gamma_l$ and $p_l$ such that
 \begin{equation}\label{roshdbn}
 \vert \partial_L b_n(x)+ \partial_L \sigma_n(x) \vert^2 \leq \Gamma_l(1+\vert x \vert^{p_l}).
 \end{equation} Then, obviously, $b_{n}$ and $\sigma_n$ are continuously differentiable  and therefore globally Lipschitz. 
 By Hypothesis \ref{hypo2}, $b'(x)$ is negative and since $ac \leq a^2/2+c^2/2$ for all $a,c$ and $0 \leq \phi(.) \leq 1$, there exists a constant $C_{0,b}$ independent of $n$ such that
 \begin{equation}\label{bnprim}
  b'_n(x) \leq  C_{0,b},   \qquad  ~\textmd{and} ~\qquad  ( \sigma'_n(x))^2  \leq 2 (\sigma'(x))^2 +C_{0,\sigma} 
 \end{equation}
Notice that by Theorem 2.2.1 in Nualart (2006), the SDE (\ref{equan}) has a strong solution in $\D^{\infty}$, that is, there exists $X_t^n$ in $\D^{\infty}$ which satisfies 
\begin{equation}\label{equan}
X_{t}^n=x_{0}+\int_{0}^{t} b_{n}(X_{s}^n)ds+\int_{0}^{t}
\sigma_n(X_{s}^n)dW_{s}.
\end{equation}
Denote by $\Lnt$ the infinitesimal operator associated to the latter SDE.
%***************************lemma
\begin{lem}
For each $t \in [0,T]$ and $p > 1$, the sequence $\{X_{t}^n\}$ is uniformly
integrable and almost everywhere convergent to $X_{t}$.
\end{lem}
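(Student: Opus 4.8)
The plan is to transfer the estimates of Lemma~\ref{xlp} from $X_t$ to the approximating processes $X^n_t$ of~(\ref{equan}). The key observation is that the one structural bound~(\ref{Bound}) survives the truncation with constants that are uniform in $n$: since $0\le\phi_n\le1$, and therefore $\phi_n^2\le\phi_n$, one has $ab_n(a)=\phi_n(a)\,ab(a)$ and $\sigma_n^2(a)=\phi_n^2(a)\sigma^2(a)\le\phi_n(a)\sigma^2(a)$, so that, discarding the nonnegative term $(4p-1)|a\sigma'(a)|^2$ in~(\ref{Bound}) and using $\tfrac{2p-1}{2}\le 12p-1$ exactly as in Lemma~\ref{xlp},
$$2p\,ab_n(a)+p(2p-1)\sigma_n^2(a)\;\le\;\phi_n(a)\big(2p\alpha_p+2p\beta_p|a|^2\big)\;\le\;2p|\alpha_p|+2p|\beta_p|\,|a|^2$$
for every $a\in\R$ and every $n$. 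The $f$-contribution in the drift is absorbed as in Lemma~\ref{xlp} via the global Lipschitz bound on $f$.

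With this uniform-in-$n$ bound in hand I would rerun the It\^o-formula argument of Lemma~\ref{xlp}: since $b_n+f$ has linear growth and $\sigma_n$ is bounded, $X^n_t$ has finite moments of all orders for each fixed $n$, so (localising at $T_M=\inf\{t:|X^n_t|>M\}$ and using Fatou if one prefers) $\tfrac{d}{dt}\E\big[(X^n_{t\wedge T_M})^{2p}\big]=\E\big[\Lnt(X^n_{t\wedge T_M})^{2p}\big]\le\alpha''_p\,\E\big[(X^n_{t\wedge T_M})^{2p-2}\big]+\beta''_p\,\E\big[(X^n_{t\wedge T_M})^{2p}\big]$ with $\alpha''_p,\beta''_p$ independent of $n$ and $M$. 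Starting from $p=1$, the Gronwall inequality of~\cite{Hasminskii12}, then $M\to\infty$ by Fatou, then induction on $p$, and finally the interpolation inequality of Lemma~\ref{xlp}, give $\sup_{n\ge1}\sup_{0\le t\le T}\E[|X^n_t|^p]<\infty$ for every $p>1$. In particular $\{X^n_t\}_n$ is bounded in $L^p(\Omega)$ for each $p>1$, hence uniformly integrable.

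For the almost sure convergence I would use pathwise coincidence before the exit from $A_n=\{x:|x|\le n^\xi\}$. On $A_n$ the coefficients of~(\ref{equan}) agree with those of~(\ref{equa}), so up to $\tau_n$ both $X$ and $X^n$ solve the same SDE with globally Lipschitz coefficients, and pathwise uniqueness forces $X^n_{t\wedge\tau_n}=X_{t\wedge\tau_n}$ for every $t$. The sets $A_n$ increase to $\R$, so $\tau_n$ is nondecreasing, and by Lemma~\ref{xlp} ($X$ does not explode) we have $\tau_n\uparrow\infty$ almost surely. Hence for almost every $\omega$ there is $N(\omega)$ with $\tau_n(\omega)>t$ for all $n\ge N(\omega)$, whence $X^n_t(\omega)=X_t(\omega)$ for all such $n$; the sequence is eventually constant, so $X^n_t\to X_t$ almost surely.

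I expect the only non-routine point to be the uniform-in-$n$ transfer of~(\ref{Bound}) carried out in the first paragraph; everything after it is a mechanical repetition of Lemma~\ref{xlp} together with its own conclusion that $\tau_n\uparrow\infty$. If, in addition, one wants the full estimate~(\ref{Bound}) --- and not merely its moment consequence --- to pass to $b_n,\sigma_n$, which is what the later estimates on the Malliavin derivatives will need, the term to watch is $\sigma'_n=\phi'_n\sigma+\phi_n\sigma'$; here $|a\sigma'_n(a)|^2\le 2|a\phi'_n(a)\sigma(a)|^2+2|a\sigma'(a)|^2\le 2M_1^2|a|^2+2|a\sigma'(a)|^2$ by~(\ref{supderbphi}), so this contribution is dominated by the $|a\sigma'(a)|^2$ already present in~(\ref{Bound}) together with a harmless quadratic term.
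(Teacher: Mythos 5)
Your proposal is correct and follows essentially the same route as the paper: the uniform moment bound $\sup_n\sup_t\E[|X^n_t|^{2p}]<\infty$ is obtained by passing the structural inequality (\ref{Bound}) through the cutoff via $0\le\phi_n\le 1$, then It\^o's formula, the Gronwall inequality of \cite{Hasminskii12}, induction on $p$ and interpolation, while the almost sure convergence comes from pathwise coincidence of $X^n$ and $X$ up to $\tau_n$ together with $\tau_n\uparrow\infty$ from Lemma \ref{xlp}. Your handling of the sign of $\alpha_p+\beta_p|a|^2$ via absolute values, and the closing remark on $\sigma'_n$, are slightly more careful than the paper's but do not change the argument.
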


\begin{proof}
To prove the convergency, let $X^{\tau_{n}}$ denotes $X$ stopped at $\tau_{n}$.
By the proof of Lemma \ref{xlp}, $\tau_n$ increases to infinity as $n$ tends to infinity.
By the choice of the function $\phi_{n}(.)$, it then follows that $X_{t}^{\tau_{2n}}=X_{t}^{\tau_{n}}$ for all $t \leq \tau_{n}$.
Thus, for fix $t \in [0,T]$, letting $n$ tend to infinity, we
will have $lim_{n \rightarrow \infty}X_{t}^{n}=lim_{n \rightarrow \infty}X_{t}^{\tau_{n}}=X_{t}$ a.s.\\
Now, we prove that the sequence $\{X_{t}^n\}$ is uniformly
integrable. In fact, we show that for every $p > 1$,
\begin{equation}\label{ssup}
\sup_{n \geq 1}\sup_{0 \leq t \leq T}\E\Big[\vert X_{t}^n\vert^p\Big] \leq
c_{p}.
\end{equation}
By definition of the operator $\Lnt$ and (\ref{Bound}), since $\phi_n(.) \leq 1$, we have

$\begin{aligned}
\Lnt ( X^n_t )^{2p} & = 2p (X^n_t)^{2p-1}\Big(b_n(X^n_t)+f(X^n_t) \Big)+ p(2p-1)\sigma_n^2(X^n_t) (X^n_t)^{2p-2}\\
    & \leq 2p\phi_{n}(X^n_t)\Big(\alpha_p+\beta_p X_t^{2}\Big)(X^n_t)^{2p-2} + 2p (X^n_t)^{2p-2}(X^n_t)f(X^n_t) \\
    & \leq 2p\beta_p (X^n_t)^{2p} +2p\alpha_p (X^n_t)^{2p-2}+ 2p (X^n_t)^{2p-2} \Big( \frac{(X^n_t)^2}{2}+ \frac{f^2(X^n_t)}{2}  \Big)\\
    & \leq p(2\beta_p+1+2k_1^2) (X^n_t)^{2p}+ 2p(\alpha_p+\vert f(0) \vert^2) (X^n_t)^{2p-2} .
 \end{aligned}$\\
  \\
Using It\^o's formula,
\begin{equation*}
\frac{d}{dt} \E\Big[ ( X^n_t )^{2p}\Big]= \E\Big[\Lnt( X^n_t )^{2p}\Big] \leq  \beta'_p   \E\Big[ (X^n_t)^{2p} \Big]+\alpha'_p \E\Big[   (X^n_t)^{2p-2} \Big]
\end{equation*}
Now setting $p=2$ and applying Gronwall's inequality in \cite{Hasminskii12}, for $p=2$ we derive (\ref{ssup}). By induction on $p$ and 
the following interpolation inequality 
\begin{equation*}
\E\Big[ \vert X^n_t \vert^{2p+1} \Big] \leq \Big(\E[(X^n_t)^{2p}]\Big)^{\frac12} \Big(\E[(X^n_t)^{2p+2}]\Big)^{\frac12},
\end{equation*}
we conclude that for every $p \geq 2$ inequality (\ref{ssup}) holds. \\
\end{proof}
\begin{cor}\label{verge}
The following properties hold %it's better to enumerate
\begin{itemize}
\item  
 For every $p \geq 2$, $X^n_t \longrightarrow X_t$ in $L^p(\Omega)$,
\item 
For every $q \geq q_1$, $b_n(X^n_t) \longrightarrow b(X_t)$ and $\sigma_n(X^n_t) \longrightarrow \sigma(X_t)$in $L^q(\Omega)$.
\end{itemize}
\end{cor}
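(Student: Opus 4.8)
The plan is to promote the almost sure convergence $X_t^n\to X_t$ obtained in the previous lemma to convergence in $L^p$ and in $L^q$ by a uniform integrability argument (Vitali's convergence theorem), the uniformity in $n$ being furnished by the bound (\ref{ssup}) together with the polynomial growth (\ref{polyno}).

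For the first assertion, fix $t\in[0,T]$ and $p\geq 2$. By (\ref{ssup}) we have $\sup_{n\geq 1}\E[\,|X_t^n|^{p+1}\,]<\infty$, so the family $\{\,|X_t^n|^p\,\}_{n\geq 1}$ is bounded in $L^{(p+1)/p}(\Omega)$ and hence uniformly integrable; since $X_t\in L^{p+1}(\Omega)$ by Lemma \ref{xlp}, the family $\{\,|X_t^n-X_t|^p\,\}_{n\geq 1}$ is uniformly integrable as well. As $|X_t^n-X_t|^p\to 0$ almost surely, Vitali's theorem yields $\E[\,|X_t^n-X_t|^p\,]\to 0$, that is $X_t^n\to X_t$ in $L^p(\Omega)$. (Since the dominating bound coming from (\ref{ssup}) is uniform in $t$, integrating in $t$ and applying dominated convergence also gives convergence in $L^p([0,T]\times\Omega)$.)

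For the second assertion, fix $q\geq q_1$ and $t\in[0,T]$. I would first check almost sure convergence. On the event $\{X_t^n\to X_t\}$, which has full probability, the sequence $(X_t^n)_{n}$ is bounded, say $|X_t^n|\leq M$ for all $n$ with $M=M(\omega)<\infty$; then for every $n$ with $n^{\xi}\geq M$ the point $X_t^n$ lies in $A_n$, where $\phi_n\equiv 1$, so that $b_n(X_t^n)=b(X_t^n)$ and $\sigma_n(X_t^n)=\sigma(X_t^n)$ for all such $n$. Continuity of $b$ and $\sigma$ then gives $b_n(X_t^n)\to b(X_t)$ and $\sigma_n(X_t^n)\to\sigma(X_t)$ almost surely. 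To upgrade this to $L^q$, observe that $0\leq\phi_n\leq 1$ together with (\ref{polyno}) gives $|b_n(X_t^n)|\leq|b(X_t^n)|\leq\lambda_0(1+|X_t^n|^{q_0})$ and similarly for $\sigma_n$; invoking (\ref{ssup}) once more we get $\sup_{n\geq 1}\E[\,|b_n(X_t^n)|^{2q}\,]<\infty$ and $\sup_{n\geq 1}\E[\,|\sigma_n(X_t^n)|^{2q}\,]<\infty$, while $b(X_t),\sigma(X_t)\in L^{2q}(\Omega)$ by Lemma \ref{xlp}. Hence $\{\,|b_n(X_t^n)-b(X_t)|^q\,\}_n$ and $\{\,|\sigma_n(X_t^n)-\sigma(X_t)|^q\,\}_n$ are uniformly integrable, and a further application of Vitali's theorem gives the claimed convergences in $L^q(\Omega)$.

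I expect the only delicate point to be the passage to almost sure convergence of $b_n(X_t^n)$ and $\sigma_n(X_t^n)$: because the cutoff $\phi_n$ depends on $n$, one must argue, as above, that along almost every trajectory the cutoff is eventually inactive at the running point $X_t^n$, which is precisely where the almost sure convergence $X_t^n\to X_t$ and the finiteness of its limit enter. Everything else is routine once the uniform-in-$n$ moment control (\ref{ssup}) and the polynomial growth of the coefficients are available.
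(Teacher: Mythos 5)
Your proof is correct and follows essentially the same route as the paper: almost sure convergence from the preceding lemma, the uniform moment bound (\ref{ssup}) together with the polynomial growth (\ref{polyno}), and uniform integrability (Vitali) to upgrade to $L^p$ and $L^q$. The only cosmetic difference is in handling the cutoff: the paper bounds $P(|b_n(X_t^n)-b(X_t^n)|>\epsilon)\leq P(|X_t^n|>n^{\xi})$ by Chebyshev to get convergence in probability and then splits by the triangle inequality, whereas you observe pathwise that the cutoff is eventually inactive and obtain almost sure convergence of $b_n(X_t^n)$ directly, which is, if anything, slightly cleaner.
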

\begin{proof}
By inequality (\ref{ssup}) and uniform integrability of sequence $\{X_t^n\}$, we derive uniform integrability of sequences  $\{b'_n(X_t^n)\}$  and $\{\sigma'_n(X_t^n)\}$. Also, we know that t $X_t^n$ converges to 
$X_t$ almost surely. Thus $X^n_t \longrightarrow X_t$ in $L^p(\Omega)$ and 
$b'(X^n_t) \longrightarrow b'(X_t)$ in $L^p(\Omega)$. On the other hand, for every $\epsilon \geq 0$ and every integer $p \geq 2$ 
\begin{equation*}
 P(\vert b_n'(X_t^n)- b(X_t^n) \vert > \epsilon) \leq P(\vert X_t^n \vert > n)  \leq \frac{\sup_n \E[\vert X_t^n \vert ^p]}{n^p} \leq \frac{c_p}{n^p}
 \end{equation*}
 So that, $b'_n(X^n_t)-b'(X^n_t) \longrightarrow 0$ in probability and  thus in  $L^p(\Omega)$ by uniform integrability of $\{b'_n(X^n_t)-b'(X^n_t)\}$. The triangle inequality completes the proof.
\end{proof}
%************************
\subsection{Weak differentiability of the solution}
In this subsection, we prove the weak differentiability of  $X_t$, using Lemma 1.2.3 in \cite{Nualart06} and Theorem \ref{2lay}.
Then, this fact and Theorem \ref{mlay} results Theorem \ref{Dinf}. \\%Why Lemma not Theorem? Why in 3 steps? Why there is no staement for the steps?
\begin{lem}\label{firstderiv}
Assuming Hypothesis {\rm \ref{hypo2}}, for every $p>1$ the unique strong
solution of SDE {\rm (\ref{equa})} is in $\D^{1,p}$. In addition, for $r > t$, $\Drt =0$ and for $r \leq t$ 
\begin{equation}\label{equalinear}
\Drt =\sigma(X_{r})+\int_{r}^{t}\Big( b'(X_{s})+ f'(X_{s})\Big)\Drs
ds+\int_{r}^{t} \sigma'(X_{s})\Drs dW_{s}.
\end{equation}

\end{lem}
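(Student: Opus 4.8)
The plan is to obtain $X_t\in\D^{1,p}$ from Theorem \ref{2lay} applied to the truncated equations (\ref{equan}), and then to identify (\ref{equalinear}) by passing to the limit in the linear SDE solved by the Malliavin derivatives of the $X^n_t$; uniqueness of the strong solution of (\ref{equa}) has already been recorded at the start of this section. Most of the standing hypotheses of Theorem \ref{2lay} are in hand: $b_n=\phi_n b$ and $\sigma_n=\phi_n\sigma$ are globally Lipschitz and $C^1_{loc}$, so by Theorem 2.2.1 of \cite{Nualart06} each $X^n_t$ lies in $\D^{\infty}$ and, for $t\ge r$, $\Dnrt$ solves the second line of $LS(r)$ with $D_rX^n_r=\sigma_n(X^n_r)$, while $\Dnrt=0$ for $t<r$; the choice $F(x)=|x|^{2p}$ makes (\ref{supF}) a restatement of (\ref{ssup}); $X^n_t\to X_t$ a.s.\ and in every $L^p(\Omega)$, and $\tau_n\to\infty$ a.s.\ (Lemma \ref{xlp} and the approximation lemma); and by Corollary \ref{verge} (and the same argument applied to the derivatives) $b_n(X^n_s)\to b(X_s)$, $\sigma_n(X^n_s)\to\sigma(X_s)$, $b'_n(X^n_s)\to b'(X_s)$, $\sigma'_n(X^n_s)\to\sigma'(X_s)$ and $f'(X^n_s)\to f'(X_s)$ in the relevant $L^q$. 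So it remains to verify (\ref{L}), (\ref{G}), (\ref{cL}) and (\ref{r}).

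For the Lyapunov function I would try $V_p(x,y)=(1+x^2)^{\ell_p}|y|^{2p}$ with a suitable exponent $\ell_p$ (comparable to $p$). Since $(1+x^2)^{\ell_p}\ge1$ one has $V_p(x,y)\ge|y|^{2p}$, so a uniform bound on $\sup_n\sup_{r\le t\le T}\E[V_p(X^n_t,\Dnrt)]$ gives uniform control of $\E[|\Dnrt|^{2p}]$, hence (Jensen in the variable $r$) of $\E[\|DX^n_t\|_H^{2p}]$. As $V_p$ factorizes, $L_{n,r}V_p=|y|^{2p}\,\Psi_{n,p}(x)$; using $b'_n\le C_{0,b}$ and $(\sigma'_n)^2\le2(\sigma')^2+C_{0,\sigma}$ from (\ref{bnprim}), $|f'|\le k_1$, $0\le\phi_n\le1$, $\phi_n^2\le\phi_n$, the identity $(\sigma'(x))^2(1+x^2)=(\sigma'(x))^2+|x\sigma'(x)|^2$, the bound $|x\sigma(x)\sigma'(x)|\le\tfrac12\sigma^2(x)+\tfrac12|x\sigma'(x)|^2$ and the cutoff estimates $|\sigma\phi'_n|,|b\phi'_n|\le M_1$ of (\ref{supderbphi}), one sees that every term of $\Psi_{n,p}$ is either a polynomial in $x$ dominated by $(1+x^2)^{\ell_p}$, or has the form $\phi_n(1+x^2)^{\ell_p-1}$ times a fixed linear combination $C_1\sigma^2(x)+C_2|x\sigma'(x)|^2+C_3|\sigma'(x)|^2$; the one genuinely favourable term is $2\ell_p\,\phi_n\,xb(x)\,(1+x^2)^{\ell_p-1}$, which is negative since $b$ is monotone and which carries the same factor $\phi_n$ as the dangerous terms.

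Multiplying (\ref{Bound}) by $2\ell_p$ then shows that $2\ell_p\,xb(x)+C_1\sigma^2+C_2|x\sigma'|^2+C_3|\sigma'|^2$ is bounded by a quadratic in $x$ as soon as $\ell_p$ is large enough that $C_1\le2\ell_p(12p-1)$ and $C_2+C_3\le2\ell_p(4p-1)$ — this is precisely the role played by the constants $12p-1$ and $4p-1$ in Hypothesis \ref{hypo2}, and such an $\ell_p$ exists. Consequently $\Psi_{n,p}(x)\le c(1+x^2)^{\ell_p}$, which yields (\ref{L}) with $G\equiv0$ and $h$ a polynomial with uniformly bounded $X^n$-moments (by (\ref{ssup})), hence also (\ref{G}); (\ref{cL}) follows from a similar estimate with $K_p(x,y)=(1+x^2)^{N_p}|y|^{4p}$ for an appropriate $N_p$, and (\ref{r}) from (\ref{ssup}) together with the corresponding bound on $\E[(1+(X^n_t)^2)^{N_p}|\Dnrt|^{4p}]$ — indeed the whole family $\sup_n\sup_{r\le t}\E[(1+(X^n_t)^2)^\ell|\Dnrt|^{2p}]<\infty$ is obtained at once by a single It\^o--Gronwall bootstrap over the exponents. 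Theorem \ref{2lay} now gives $X_t\in\D^{1,p}$, $X^n_t\to X_t$ in $L^p([0,T]\times\Omega)$ and $DX^n_t\to DX_t$ in $L^p(\Omega;H)$.

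It remains to identify the limit equation. For $r>t$ every $\Dnrt$ is $0$, hence $\Drt=0$. For $r\le t$, I pass to the limit $n\to\infty$ in the linear SDE for $\Dnrt$: $\sigma_n(X^n_r)\to\sigma(X_r)$ provides the initial value, and, using $\Dnrt\to\Drt$ in $L^p([0,T]\times\Omega)$, the $L^q$-convergences of $b'_n(X^n_\cdot)$, $f'(X^n_\cdot)$, $\sigma'_n(X^n_\cdot)$ and the uniform moment bounds (to pass to the limit in the products via H\"older's inequality and uniform integrability), the drift and the stochastic integral converge to $\int_r^t(b'(X_s)+f'(X_s))\Drs\,ds$ and $\int_r^t\sigma'(X_s)\Drs\,dW_s$, which is (\ref{equalinear}). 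The step I expect to be the main obstacle is the construction and verification of $V_p$: forcing $L_{n,r}V_p$ to produce exactly the combination occurring in (\ref{Bound}), with a nonnegative weight and the same $\phi_n$ factor as the terms it must absorb, uniformly in $n$; once that uniform estimate is secured, the remaining convergence arguments are routine consequences of the a priori $L^p$-bounds.
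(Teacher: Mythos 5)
Your proposal is correct and follows the paper's macro-structure exactly (truncate, verify the hypotheses of Theorem \ref{2lay} via a Lyapunov function, then pass to the limit in the linear SDE), but the Lyapunov function itself — the heart of the argument — is genuinely different. The paper takes the additive function $V_q(x,y)=x^{4q}+x^{2q}y^{2q}+y^{2q}+M$, splits $L_{n,r}V_q$ into four groups $I_1,\dots,I_4$, absorbs the $\sigma^2$ and $|x\sigma'|^2$ terms into $xb(x)$ via (\ref{Bound}), and then must treat the leftover $y^{2q}(\sigma'(x))^2$ separately in its Step 2, using the polynomial growth $(\sigma')^2\le\gamma_1(1+|x|^{2q_1})$ and a two-case analysis ($q=q_1$ versus $q>q_1$, with interpolation for small $q$) so that $x^{2q_1}y^{2q}$ is dominated by the cross term $x^{2q}y^{2q}$. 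Your multiplicative choice $(1+x^2)^{\ell_p}|y|^{2p}$ factorizes the generator into $|y|^{2p}\Psi_{n,p}(x)$, reducing everything to a one-variable inequality, and you dispose of the residual $(\sigma')^2$ term by the elementary bound $|\sigma'(x)|^2\le|x\sigma'(x)|^2+\sup_{|x|\le1}|\sigma'|^2$, feeding it back into (\ref{Bound}); this avoids the case split on $q_1$ entirely and makes the role of the constants $12p-1$ and $4p-1$ more transparent. One caveat: your phrase ``as soon as $\ell_p$ is large enough'' is not quite right, because the constant you call $C_1$ is not independent of $\ell_p$ — the term $\tfrac12 g''\sigma_n^2$ contributes roughly $\ell_p(2\ell_p-1)$ to it, so the condition $C_1\le 2\ell_p(12p-1)$ bounds $\ell_p$ from \emph{above} while $C_2+C_3\le2\ell_p(4p-1)$ bounds it from below; you must check the window is nonempty (it is, e.g.\ $\ell_p$ of order $p$ works after redistributing the cross term $g'\sigma_n\sigma_n'$ with a weighted Young inequality). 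With that adjustment the estimate closes, and your limit identification (via the $L^p$ convergences of Corollary \ref{verge} and of $DX^n$) is an acceptable, slightly cleaner alternative to the paper's stopping-time argument in its Step 3.
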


\begin{proof}
By Theorem 2.2.1 in \cite{Nualart06} we know that for every $r > t$, $\Dnrt=0$ and for every $r \leq t$ 
\begin{equation*}
\Dnrt =\sigma_n(X^n_{r})+\int_{r}^{t}\Big(\bnps+\fnps\Big)\Dnrs
ds+\int_{r}^{t} \sigma'_n(X_{s}^n)\Dnrs dW_{s}.
\end{equation*}
By Theorem \ref{2lay}, it is sufficient to find a Lyapunov function satisfying conditions (\ref{L}) and (\ref{r}),  which implies that for some constant $c_q$, 
\begin{equation}\label{karand}
\sup_{n \geq 1}\sup_{0 \leq t \leq T}\E\Big[\vert \Dnt \vert^p\Big] \leq c_{q},
\end{equation}
Then we will show that $DX_t$ is the solution of SDE (\ref{equalinear}). To this end, we proceed through the following three steps.\\
{\bf Step 1}. In this step, we introduce some Lyapunov functions $V_q(.)$ for every $q \geq 1$ and obtain some upper bounds for $L_{n,r}V_q$ in terms of $V_q(.)$ and $\sigma'(.)$. \\
For every $q \geq 1$ and every $M > 0$ large enough, choose the following Lyapunov function 
$$V_q(x,y):= x^{4q}+x^{2q}y^{2q} +y^{2q}+M,$$
then by  definition of $V_q$,  equation (\ref{ssup}) and Theorem 2.2.1 in  \cite{Nualart06}, the conditions (\ref{cL}) and (\ref{r}) hold. To prove (\ref{L}), note that by  definition of $L_{n,r}$, 
it holds
\begin{align}
L_{n,r} V_q(x, y) & = 4qx^{4q-2}\Big[ x \Big(b_n(x)+f(x)\Big) + \frac{4q-1}{2} \sigma_n^2(x) \Big]  \nonumber \\
& +2qy^{2q}x^{2q-2}\Big[ x \Big(b_n(x)+f(x)\Big)+ \frac{2q-1}{2} \sigma_n^2(x)\Big] \nonumber \\
& +2q y^{2q}\Big[2qx^{2q-1}\sigma_n(x)\sigma'_n(x) +\frac{2q-1}{2} (\sigma'_n)^2(x)\Big( x^{2q} +1 \Big) \Big]  \nonumber \\
&+ 2q y^{2q}\Big[ \Big(b'_n(x)+f'(x)\Big) \Big( x^{2q} +1 \Big) \Big] =:( I_1+I_2+I_3+I_4)(x,y).   \label{I's}
\end{align}
Since $f'$ is bounded, using (\ref{bnprim}) we get
\begin{equation}\label{I4}
I_4(x,y) \leq  2q(k_1+C_{0,b}) V_{q}(x,y). 
\end{equation}
Applying (\ref{bnprim})

$\begin{aligned}
I_3(x,y) & \leq  2q y^{2q}\Big[2qx^{2q-2}\Big( \frac{(x \sigma'_n(x))^2}{2}+\frac{\sigma_n^2}{2}\Big)+\frac{2q-1}{2} (\sigma'_n)^2(x)\Big( x^{2q} +1 \Big) \Big] \nonumber \\
& \leq 2qy^{2q}x^{2q-2} \Big[ \frac{2q}{2} \sigma^2(x) + (4q-1) x^2\sigma'^2(x) \Big] \nonumber \\
&  +2q(2q-1) y^{2q} \sigma'^2(x)+q(4q-1)C_{0,\sigma}x^{2q}y^{2q}+q(2q-1)C_{0,\sigma}y^{2q}.
\end{aligned}$\\

Now by (\ref{Bound}), we have 
\begin{align}
(I_1+I_2+ I_3)(x,y) & \leq 4qx^{4q-2}\Big[ \alpha_q +\beta_q \vert x \vert^2\Big]+ 4qx^{4q-2}\Big[\frac{x^2}{2}+\frac{f^2(x)}{2} \Big]  \nonumber \\
& +2qy^{2q}x^{2q-2}\Big[\alpha_q +\beta_q \vert x \vert^2\Big]+2qy^{2q}x^{2q-2}\Big[\frac{x^2}{2}+\frac{f^2(x)}{2} \Big]  \nonumber \\
& + 2q(2q-1) y^{2q}(\sigma'(x))^2+q(4q-1)C_{0,\sigma}x^{2q}y^{2q}+q(2q-1)C_{0,\sigma}y^{2q} ~~~~~~~  \label{I1I2I3} 
\end{align}
To show that the terms in the right hand side of the latter inequality are bounded, we need the Young's inequality.
Using (\ref{young}) for $r=4q$ and again for $r=2q$ with $c=\triangle_1=1$, we have 
\begin{equation}\label{x4q2q}
x^{4q-2} \leq \frac{2q-1}{2q}x^{4q}+\frac{1}{2q},  \qquad  \textmd{and} \qquad  x^{2q-2} \leq \frac{q-1}{q}x^{2q}+\frac{1}{q}.
\end{equation}
Applying (\ref{x4q2q}) in (\ref{I1I2I3}), we find some constant $c_q$ such that 
\begin{equation}\label{1}
I_1(x,y)+I_2(x,y)+ I_3(x,y)  \leq c_q V_q(x,y) + 2q(2q-1) y^{2q}(\sigma'(x))^2.  
\end{equation}
Substituting (\ref{I4}) and (\ref{1}) in (\ref{I's}), we conclude that 
\begin{equation}\label{LVG}
L_{n,r} V_q(x, y)  \leq (c_q+2q(k_1+C_{0,b}) V_q(x,y) + 2q(2q-1) y^{2q}(\sigma'(x))^2  
\end{equation}
{\bf Step 2}. Here, we show the inequality (\ref{L}) for every $q \geq 1$. First, let  $q=q_1$. By (\ref{polyno}) for some  $c_q$ independent of $n$, we have \\

$\begin{aligned}
 2q(2q-1) y^{2q}(\sigma'(x))^2  & \leq  \gamma_1 4q(2q-1) y^{2q}+\gamma_1 4q(2q-1) y^{2q}\vert x \vert^{2q_1} \leq c_{q_1} V_{q_1}(x,y).     \\
\end{aligned}$\\

Substitute this bound in (\ref{LVG}) and use Theorem \ref{2lay} to derive (\ref{karand}) for $p=q_1$, from which and interpolation inequality we derive the result for each $p < q_1$.\\
Now, let $q > q_1$. Using H\"older inequality, for some constant $c'_{q_1}$ independent of $n$ we have \\

$\begin{aligned}
 y^{2q}x^{2q_1} = y^{2q-2q_1}x^{2q_1}y^{2q_1}  & \leq  \frac{q-q_1}{q} \Big(y^{2q-2q_1}\Big)^{\frac{q}{q-q_1}}+ \frac{q_1}{q}\Big( x^{2q_1}y^{2q_1}\Big)^{\frac{q}{q_1}} \nonumber \\
 & \leq  \frac{q-q_1}{q} y^{2q }  +   \frac{q_1}{q} x^{2q}y^{2q} \leq c'_{q_1} V_{q}(x,y)\\
\end{aligned}$\\
\\
Again, substitute this bound in (\ref{LVG}) and use Theorem \ref{2lay} to derive (\ref{karand}) for $p >q_1$.\\
{\bf Step 3}. Now, we show that $DX_t$ is the solution of SDE (\ref{equalinear}). For every $\epsilon > 0$, by Lemma (\ref{io}) we have 
\begin{equation*}
P(\vert \Dnrt - \Drt \vert > \epsilon) \leq P( t \geq \tau_n ) 
                       \leq P(\sup_{0 \leq s \leq t} \vert X_s \vert > n) \leq \frac{\E\Big[\sup_{0 \leq t \leq s} \vert X_s^n \vert^p\Big]}{n^p} \leq \frac{c'_p}{n^p}
\end{equation*}
Therefore, $\Dnrt \longrightarrow \Drt$ in probability. Since the sequence $\vert \Dnrt \vert ^p$ is uniformly integrable, this convergence still hold in $L^p(\Omega)$ for every $p \geq 2$. From Corollary \ref{verge},  $\Drt$ is the solution to SDE (\ref{equalinear}) and the proof is completed.
\end{proof}
%***************end of lemma
Now, Since the Lyapunov function $V_q$ and the functions $b$ and $\sigma$ have polynomial growth, by induction on $m$ in Theorem \ref{mlay} we can derive  inequalities (\ref{rm}) and (\ref{xym}). Condition (\ref{cLm})  is also obviously true, and Theorem \ref{Dinf} follows as a result. 
%***********************************section
\section{The nondegeneracy condition}
In this section, we will show how the regularity of the distribution of  $X(t)$ could be derived from the nondegeneracy condition of  it. \\
Denote the Malliavin covariance matrix of $X_t^n$ and $X_t$  by $\Lambda_{X^n}(t)$ and $\Lambda_{X}(t)$, for each $0 \leq t \leq T$, respectively.
Let $Z^n_t$ be the solution of the following linear SDE;
$$Z^n_t= 1+ \int_{0}^{t} [b'_n(X^n_s) +f'(X^n_s)]Z^n_s ds + \int_{0}^{t}  \sigma'_n(X^n_s) Z^n_s dW_s $$
and
\begin{equation*}
C^n_t:= \int_{0}^{t} \exp\Big\{-2\int_{0}^{r} [b_n'(X_s^n)-\frac12 (\sigma'_n)^2(X_s^n)]ds+2\int_{0}^{r} \sigma'_n(X_s^n)]dW_s\Big\} \sigma^2(X_r^n) dr. 
\end{equation*}
Then $\Lambda_{X^n}(t)= C_t^n(Z^n_t)^{-2}$. Also, by the proof of Theorem \ref{firstderiv}, one can easily show that for every $p \geq 2$, there exist some constant $l_p$ such that 
\begin{equation}\label{lz}
\sup_{n \geq 1}\E\Big[\vert Z_t^n \vert^p \Big] \leq l_p.
\end{equation}
\begin{lem}
The nondegeneracy condition  is satisfied for $X_t$, and for every $p \geq 2$ and $\epsilon < \epsilon_0(p)$, 
\begin{equation*}
P\Big(\Lambda_{X}(t) \leq \epsilon\Big) \leq \epsilon^p
\end{equation*}
\end{lem}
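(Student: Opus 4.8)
The plan is to transfer the nondegeneracy estimate from the approximating solutions $X^n_t$ to $X_t$ by a limiting argument, using the uniform bounds already established. First I would record that for each $n$ the SDE (\ref{equan}) has globally Lipschitz coefficients with polynomially growing derivatives, so Proposition \ref{horman} applies (the H\"ormander condition {\bf (H)} is satisfied for $b_n+f$ and $\sigma_n$ because near $x_0$ we have $\phi_n\equiv 1$, hence $b_n=b$, $\sigma_n=\sigma$ in a neighbourhood of $x_0$ and the condition is a condition at the single point $x_0$). Thus for every $p\geq 2$ and every $\epsilon$ small enough,
\begin{equation*}
P\Big(\Lambda_{X^n}(t)\leq\epsilon\Big)\leq\epsilon^p
\end{equation*}
with a threshold $\epsilon_0(p)$ that can be taken uniform in $n$, again because the relevant quantities at $x_0$ do not depend on $n$.

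Next I would pass to the limit. Recall $\Lambda_{X^n}(t)=C^n_t (Z^n_t)^{-2}$, and similarly $\Lambda_X(t)=C_t(Z_t)^{-2}$ with the analogous expressions for $C_t$, $Z_t$ built from $b'$, $f'$, $\sigma'$, $\sigma$ evaluated along $X_t$. Using Lemma \ref{firstderiv} and Corollary \ref{verge}, together with the bound (\ref{lz}) and the uniform $L^p$ bounds on $X^n_t$ and $\Dnrt$, one shows that $Z^n_t\to Z_t$ and $C^n_t\to C_t$ in $L^p(\Omega)$ for every $p$ (the exponential integrands converge in probability by continuity and are uniformly integrable by the moment bounds). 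Consequently $\Lambda_{X^n}(t)\to\Lambda_X(t)$ in probability along a subsequence, and hence $\{\Lambda_{X^n}(t)\}$ converges to $\Lambda_X(t)$ in distribution. By the Portmanteau theorem, for the closed set $[0,\epsilon]$,
\begin{equation*}
P\Big(\Lambda_X(t)\leq\epsilon\Big)\leq\liminf_{n\to\infty}P\Big(\Lambda_{X^n}(t)\leq\epsilon+\eta\Big)\leq(\epsilon+\eta)^p
\end{equation*}
for every $\eta>0$ small, and letting $\eta\downarrow 0$ gives $P(\Lambda_X(t)\leq\epsilon)\leq\epsilon^p$ for $\epsilon<\epsilon_0(p)$. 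Finally, this estimate for all $p\geq 2$ yields $\Lambda_X(t)^{-1}\in L^p(\Omega)$ for all $p$, which is precisely the nondegeneracy condition for $X_t$.

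The main obstacle I expect is the uniformity of the threshold $\epsilon_0(p)$ in $n$ and the verification that the convergence $C^n_t\to C_t$ is genuinely in $L^p$: the integrand contains $\exp\{2\int_0^r\sigma'_n(X^n_s)\,dW_s - \int_0^r(\sigma'_n)^2(X^n_s)\,ds + \cdots\}$, a stochastic exponential whose moments must be controlled uniformly in $n$. Here one uses (\ref{bnprim}), namely $b'_n\leq C_{0,b}$ and $(\sigma'_n)^2\leq 2(\sigma')^2+C_{0,\sigma}$, together with the uniform moment bounds on $X^n_t$ and $\sup_{0\leq s\leq t}|X^n_s|$ from Lemma \ref{io}, to obtain a uniform-in-$n$ bound on $\E[|C^n_t|^p]$ and on the negative moments; the remaining convergence is then routine from the almost sure convergence $X^n_t\to X_t$ and dominated convergence. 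Once $\Lambda_{X^n}(t)\to\Lambda_X(t)$ in probability is secured, the Portmanteau step is immediate.
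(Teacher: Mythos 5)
Your overall strategy --- verify condition {\bf (H)} for each approximating SDE (immediate since $\phi_n\equiv 1$ near $x_0$), apply Proposition \ref{horman} to get the tail estimate for $\Lambda_{X^n}(t)$, then transfer it to $\Lambda_X(t)$ --- is the same as the paper's. The difference is in the transfer step. The paper argues quantitatively: it uses the inclusion $\{\Lambda_X(t)\le\epsilon\}\subseteq\{\Lambda_{X^n}(t)\le 2\epsilon\}\cup\{\vert\Lambda_{X^n}(t)-\Lambda_X(t)\vert>\epsilon\}$, chooses $N=N(\epsilon,p)$ so that $\E\big[\vert D_rX^N_t-\Drt\vert^2\big]<\epsilon^{p+1}$ (available from Step 3 of Lemma \ref{firstderiv}), and bounds the second event by Chebyshev. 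You instead pass to the limit in distribution and invoke Portmanteau with the $\epsilon+\eta$ trick; that step is sound (it is the open-set form of Portmanteau applied to $(-\infty,\epsilon+\eta)$), and in fact both arguments lean on the same unaddressed point, namely that the threshold $\epsilon_0(p)$ in Proposition \ref{horman} can be taken uniform in $n$ (the paper's $N$ depends on $\epsilon$, so it too needs the estimate for arbitrarily large $n$). Your stated justification --- that the relevant quantities at $x_0$ do not depend on $n$ --- is not adequate, since the threshold in the Kusuoka--Stroock/Norris-type estimate behind Proposition \ref{horman} depends on global bounds on the coefficients and their derivatives, not only on their values at $x_0$; but the paper is equally silent here, so I do not count this against you specifically.

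The genuine weak point is your route to $\Lambda_{X^n}(t)\to\Lambda_X(t)$. You go through $L^p$ convergence of $C^n_t$ and $Z^n_t$, which forces you to control, uniformly in $n$, positive and negative moments of stochastic exponentials whose integrands $(\sigma'_n)^2(X^n_s)$ have polynomial growth. The bounds (\ref{bnprim}) and Lemma \ref{io} give polynomial moments of $X^n$, not the exponential moments of polynomials of $X^n$ that such estimates require, so calling this step ``routine'' is not justified. The detour is also unnecessary: since $\Lambda_{X^n}(t)=\int_0^t(\Dnrt)^2\,dr$ and $\Lambda_X(t)=\int_0^t(\Drt)^2\,dr$, the convergence $\Dnrt\to\Drt$ in $L^2(\Omega)$ together with the uniform $L^p$ bounds, both already proved in Lemma \ref{firstderiv}, yields $\Lambda_{X^n}(t)\to\Lambda_X(t)$ in $L^1(\Omega)$ by Cauchy--Schwarz; this is exactly the convergence the paper exploits. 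Replace your $C^n_t$, $Z^n_t$ argument by this direct one and your proof closes.
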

\begin{proof}
By  definition of $b_n$ and $\sigma_n$, it is easy to derive  condition {\bf (H)} for these coefficients. So, the nondegeneracy condition is satisfied for $X_t^n$ for every $n \geq 1$. From (\ref{lz}) and Proposition \ref{horman}, for every $n \geq 1$ and small enough $\epsilon$,
\begin{equation*}
P\Big(\Lambda_{X^n}(t) \leq \epsilon\Big) \leq P\Big(C^n_t \leq \epsilon\Big)+ P\Big((Z_t^n)^{-2} < \epsilon \Big) \leq \epsilon^p\Big(1+\E\Big[(Z^n_t)^p\Big]\Big).
\end{equation*}
Also, \\

$\begin{aligned}
\{\Lambda_{X}(t) \leq \epsilon\} &\subseteq \{\Lambda_{X^n}(t) \leq 2\epsilon\}\cup \{\Lambda_{X^n}(t) > 2\epsilon \quad \textmd{and} \quad \Lambda_{X}(t) \leq \epsilon\} \\
     &    \subseteq \{\Lambda_{X^n}(t) \leq 2\epsilon\}\cup \{\Lambda_{X^n}(t) \leq 2\epsilon \quad \textmd{and} \quad \vert \Lambda_{X^n}(t) - \Lambda_{X}(t) \vert > \epsilon\}   \\
                   &    \subseteq \{\Lambda_{X^n}(t) \leq 2\epsilon\}\cup \{\vert \Lambda_{X^n}(t) - \Lambda_{X}(t) \vert > \epsilon\}   \\
\end{aligned}$\\
Now, by  Step3 in the proof of Theorem \ref{firstderiv}, we can choose $N \geq 1$ such that for every $n \geq N$,
$$\E\Big[\vert \Dnrt - \Drt \vert^2\Big] < \epsilon^{p+1}.$$
Then, \\
$$P\Big(\Lambda_{X}(t) \leq \epsilon \Big) \leq P\Big({\Lambda}_{X^N}(t) \leq 2\epsilon\Big)+P\Big(\vert \Lambda_{X^N}(t) - \Lambda_{X}(t) \vert > \epsilon \Big)\\$$ 
 $$ \leq  (1+l_p){\epsilon}^p+  \frac{1}{\epsilon} \E\Big[\vert \Lambda_{X^N}(t) - \Lambda_{X}(t) \vert^2\Big]$$             
$$        \leq  (1+l_p){\epsilon}^p + (t-r){\epsilon}^p,  $$
and the nondegeneracy condition holds for $X_t$.
\end{proof}

Therefore, we have a nondegenerate solution $X_t$ to  SDE (\ref{equa}) which has a $C^{\infty}$-density and thus Theorem \ref{DENSF} hold. 
\appendix
\section{constructing the approximating functions for the drift}
In this Appendix we present how we choose the functions $b_n$. This construction is motivated by Berhanu in \cite[Theorem 2.9.]{Berhanu}. Assume that $U \subset V$ be two open sets in $\R^d$ with distance $a$. For $0 \leq \epsilon \leq a$, define 
$U_{\epsilon}=\{x ; d(x, U) < \epsilon \}$. Then $U_\epsilon = \bigcup_{x \in U} B_\epsilon(x)$ and $U \subseteq U_\epsilon \subseteq V$. Fix $\epsilon$ such that 
$0 < 2\epsilon \leq a$ and let $h^\epsilon(x) =$ the characteristic function of $u_\epsilon$. Let $\psi \in C_0^\infty(\R^d)$ with $supp \psi \subseteq  B_1(0)$ and
$\int \psi(x) dx = 1$. Set $\psi_\epsilon(x) =
\frac{1}{\epsilon^{d}}\psi(\frac{x}{\epsilon})$. Consider now the convolution 
function $\psi_\epsilon \star h^\epsilon$ for $0 < 2\epsilon < d$. Since $supp \psi_\epsilon \subseteq B_\epsilon(0)$, $\psi_\epsilon \star h^\epsilon=1$ on $U$ and 
$\psi_\epsilon \star h^\epsilon=0$ outside $U_{2\epsilon}$. Note that 
for each multiindex $\alpha$,
\begin{align}
\partial_\alpha (\psi_\epsilon \star h^\epsilon) (x) & = \int \partial_\alpha ( \psi_\epsilon(y)) h^\epsilon(x-y) dy  = \frac{1}{\epsilon^{d+\vert \alpha \vert}}\int (\partial_\alpha \psi)(\frac{y}{\epsilon}) h^\epsilon(x-y) dy \nonumber \\
& =  \frac{1}{\epsilon^{\vert \alpha \vert}}\int (\partial_\alpha \psi)(z) h^\epsilon(x-\epsilon z) dz \leq \parallel \psi\parallel_\infty \frac{1}{\epsilon^{\vert \alpha \vert}}   \label{consphin}
\end{align}
Now, for every $n \geq 1$ consider $U=B_{n^\xi}(0)$, $V=B_{2n^\xi}(0)$ and $\epsilon=n^\xi $. Then thete exist the functions $\phi_n(x) := \psi_{\epsilon} \star h^\epsilon$ such that $\phi_n(x)=1$ on $U$ and $\phi_n(x)=0$ outside $V$. Since $supp \phi_n(x) \subseteq B_{2n^\xi}(0)$, by (\ref{consphin}) and (\ref{polyno}) for each multiindex $\alpha$ with $\vert \alpha \vert=c \geq 1$, we have \\

$\begin{aligned}
\vert b(x) \partial_\alpha \phi_n(x) \vert & \leq \vert b(x)\chi_{\vert x \vert \leq 2n^\xi} \vert \parallel \psi\parallel_\infty \frac{1}{n^{\xi \vert \alpha \vert}} \\
& \leq \gamma_c (1+ 2^{\xi} n^\xi)\parallel \psi\parallel_\infty \frac{1}{n^{\xi \vert \alpha \vert}} \leq 2^{\xi+1}\gamma_c \parallel \psi\parallel_\infty.
\end{aligned}$\\
In the same way, it hold true when we replace $b$ by $\sigma$. Also, 
\begin{equation*}
\vert \partial_\alpha \phi_n(x) \vert \leq \parallel \psi\parallel_\infty. 
\end{equation*}
\\

%***************************************

\end{document}